\numberwithin{equation}{section}
\theoremstyle{plain}
\newtheorem{theorem}[equation]{Theorem}
\newtheorem{lemma}[equation]{Lemma}
\newtheorem{proposition}[equation]{Proposition}
\theoremstyle{definition}
\newtheorem{definition}[equation]{Definition}
\theoremstyle{remark}
\newtheorem{remark}[equation]{Remark}
\newtheorem{example}[equation]{Example}
\newtheorem{notation}[equation]{Notation}
\newcommand*{\Z}{\mathbb Z}
\newcommand*{\N}{\mathbb N}
\newcommand{\Lab}{L^{{\rm ab}}}
\newcommand{\LVab}{LV^{{\rm ab}}}
\newcommand*{\ol}{\overline}
\newcommand{\mon}{\mathcal V}
\newcommand{\Ereg}{E^0_{\mathrm{reg}}}
\newcommand{\Tr}{\mathrm{Tr}}
\newcommand{\Idem}{\mathrm{Idem}}
\newcommand{\Ker}{\text{Ker}}
\begin{document}
\title[Weighted graphs]{Leavitt path algebras of weighted and separated graphs}

\author{Pere Ara}   \email{para@mat.uab.cat}
\address{Departament de Matem\`atiques, Edifici Cc, Universitat Aut\`onoma de Barcelona, 08193 Cerdanyola del Vall\`es (Barcelona), Spain, and}
\address{Centre de Recerca Matem\`atica, Edifici Cc, Campus de Bellaterra, 08193 Cerdanyola del Vall\`es (Barcelona), Spain.}

\begin{abstract}
In this paper we show that Leavitt path algebras of weighted graphs and Leavitt path algebras of separated graphs are intimately related.
We prove that any Leavitt path algebra $L(E,\omega)$ of a row-finite vertex weighted graph $(E,\omega)$ is $*$-isomorphic to the lower Leavitt path algebra of a certain bipartite separated graph $(E(\omega),C(\omega))$. For a general locally finite weighted graph $(E, \omega)$, we show that a certain quotient $L_1(E,\omega)$ of $L(E,\omega)$ 
is $*$-isomorphic to an upper Leavitt path algebra of another bipartite separated graph $(E(w)_1,C(w)^1)$. 
We furthermore introduce the algebra $\Lab (E,w)$, which is a universal tame $*$-algebra generated by a set of partial isometries.  
We draw some consequences of our results for the structure of ideals of $L(E,\omega)$, and we study in detail two different maximal ideals of the Leavitt algebra $L(m,n)$.  
\end{abstract}

\subjclass[2020]{16S88, 16S10}

\thanks{Partially supported by DGI-MINECO-FEDER grant PID2020-113047GB-I00, and the Spanish State Research Agency, through the Severo Ochoa and Mar\'ia de Maeztu Program for Centers and Units of Excellence in R$\&$D (CEX2020-001084-M)}

\maketitle

%\tableofcontents

%%%%%%%%%%%%%%%%%%%%%%%%%%%%%%%%%%%%%%%%%%
%%%%%%%%%%%%%%%%%%%%%%%%%%%%%%%%%%%%%%%%%

\section{Introduction}

A weighted graph is a pair $(E,\omega)$ consisting of a directed graph $E=(E^0,E^1,r,s)$ and a weight function $\omega \colon E^1\to \N$, where $\N$ is the set of positive integers. Leavitt path algebras of weighted graphs were introduced in \cite{Haz13} in order to obtain a graph theoretical model of Leavitt algebras $L (m,n)$ for arbitrary values $1\le m\le n$. Recall that Leavitt algebras were introduced by W. G. Leavitt in \cite{Lea62}, who showed that the (Leavitt) type of $L(m,n)$ is $(m,n-m)$. Some years later, Bergman found in \cite{Berg} the precise structure of the monoid $\mon (L(m,n))$ of isomorphism classes of finitely generated projective $L(m,n)$-modules. Recently, an interesting connection between the $K$-theory of Leavitt path algebras of weighted graphs and the theory of abelian sandpile models has been developed in \cite{AH}. We refer the reader to \cite{PrSurvey} for a nice survey on Leavitt path algebras of weighted graphs.

The algebras $L(m,n)$ were also described by the author and Ken Goodearl in \cite{AG2} as full corners of the Leavitt path algebras of the separated graphs $(E(m,n),C(m,n))$, and this was one of the key motivations to introduce this new type of algebras. Recall that a separated graph \cite{AG2} is a pair $(E,C)$ consisting of a directed graph $E$ and a partition $C$ of the set of edges of $E$ which refines the natural partition induced by the source function. 

	For all integers $1\leq m \leq n$, define the separated graph $(E(m,n),C(m,n))$ as follows: 
\begin{enumerate}
	\item $E(m,n)^0:=\{v,w\}$.
	\item $E(m,n)^1:=\{e_1,\ldots,e_n,f_1,\ldots,f_m\}$ ($n+m$ distinct edges).
	\item $s(e_i)=s(f_j)=v$ and $r(e_i)=r(f_j)=w$ for all $i,j$. 
	\item $C(m,n)=C(m,n)_v:=\{X,Y\}$, where $X=\{e_1,\ldots,e_n\}$ and $Y=\{f_1,\ldots,f_m\}$.
\end{enumerate}

By \cite[Proposition 2.12]{AG2}, we have an isomorphism between $L(m,n)$ and the corner algebra $wL(E(m,n),C(m,n))w$, hence $L(m,n)$ is isomorphic to a full corner of the Leavitt path algebra of the separated graph $(E(m,n),C(m,n))$. Since a full corner $eRe$ of a ring $R$ is Morita-equivalent to $R$, the rings $eRe$ and $R$ share many properties. For instance they have the same module theory and the same lattice of (two-sided) ideals.      

\begin{comment}

\begin{figure}[htb]
	\begin{tikzpicture}[scale=4]
		\node (v) at (0,1)  {$v$};
		\node (w) at (0,0) {$w$};
		\draw[->,red] (v.west) .. controls+(left:9mm) and +(up:.1mm) ..
		(w.north);
		\draw[->,red] (v.west) .. controls+(left:6mm) and +(up:1mm) ..
		(w.north);
		\draw[->,red] (v.west) .. controls+(left:3mm) and +(up:2mm) ..
		(w.north);
		\draw[->,blue] (v.east) .. controls+(right:7mm) and +(up:1mm) ..
		(w.north);
		\draw[->,blue] (v.east) .. controls+(right:3.5mm) and +(up:2mm) ..
		(w.north);
	\end{tikzpicture}
	\caption{The separated graph $(E(2,3),C(2,3))$}
	\label{fig:m,nsepargraph}
\end{figure}

\end{comment}

Observe that the graph $E(m,n)$ is a {\it bipartite graph}, that is, there is a partition of the set of vertices $E^0=E^{0,0}\sqcup E^{0,1}$ such that $s(E^1)\subseteq E^{0,0}$ and $r(E^1)\subseteq E^{0,1}$. We represent a bipartite separated graph $(E,C)$ by a diagram in which  we draw the vertices in $E^{0,0}$ in the upper level and the vertices in $E^{0,1}$ in the lower level of the diagram. According to this representation, we introduce in this paper the notions of the {\it upper Leavitt path algebra} $LV(E,C)$ and the {\it lower Leavitt path algebra} $LW(E,C)$ of a bipartite separated graph (see Section \ref{sec:weightedgraphs} for the precise definitions). Under the mild hypothesis that $s(E^1)=E^{0,0}$ and $r(E^1)=E^{0,1}$, if follows readily from the definitions that the upper and the lower Leavitt path algebras of a bipartite finitely separated graph $(E,C)$ are Morita-equivalent to the full Leavitt path algebra $L(E,C)$. It turns out that, in many examples, the significant algebra to consider is an upper or a lower Leavitt path algebra of a bipartite separated graph, see \cite[Section 9]{AE}.        

The purpose of this paper is to show that Leavitt path algebras of weighted graphs and Leavitt path algebras of separated graphs are intimately related.
A {\it vertex weighted graph} is a weighted graph $(E,\omega)$ such that $\omega (e)= \omega (f)$ for every pair of edges $e,f$ such that $s(e)=s(f)$. We will show in Section 2 that any Leavitt path algebra of a row-finite vertex weighted graph $(E,\omega)$ is $*$-isomorphic to the lower Leavitt path algebra of a certain bipartite separated graph $(E(\omega),C(\omega))$.
For a general row-finite weighted graph, we cannot construct a bipartite separated graph satisfying the property above, but we show in Section \ref{sect:algebraL1} that a certain quotient  $*$-algebra $L_1(E,w)$ of $L(E,w)$ is $*$-isomorphic to an {\it upper} Leavitt path algebra of another bipartite separated graph $(E(w)_1,C(w)^1)$. We furthermore introduce in Section \ref{sect:AbelianizedLPA} the algebra $\Lab (E,w)$, called the {\it abelianized Leavitt path algebra} of $(E,\omega)$, for any locally finite weighted graph $(E,w)$, and we show that it is $*$-isomorphic to a full corner of the $*$-algebra $\Lab (E(w)_1,C(w)^1)$ introduced in \cite{AE}. These abelianized algebras have a strong dynamical behaviour, being the crossed products of certain partial actions on totally disconnected Hausdorff topological spaces.  
Finally, we draw in Section \ref{sect:ideals} some consequences of our results for the structure of ideals of $L(E,\omega)$, shedding light on the second Open Problem in \cite[Section 12]{PrSurvey}. We illustrate our results by studying the ideals of the Leavitt algebras $L(m,n)$. In particular, two specific examples of maximal ideals of $L(m,n)$ are described.

%%%%%%%%%%%%%%%%%%%%%%%%%%%%%%%%%%%%%%%%%%%%%%%%%%%%%%%%%%%%%%%%%%%%%%%%%%%%%%%%%%%%%%%%%%%%%%%%

\section{Bipartite separated graphs and weighted graphs}\label{sec:weightedgraphs}

We start with the definition of a separated graph. Concerning directed graphs, we will follow the conventions and notation in the book \cite{AAS}. In particular we will use the following definition of a path. Let $E=(E^0,E^1,r,s)$ be a directed graph. Then a trivial path (or path of length $0$) is just a vertex in $E$, and a non-trivial path is a sequence $e_1e_2\cdots e_n$ of edges in $E$ such that $r(e_i)= s(e_{i+1})$ for $i=1,\dots , n-1$. The extended (or double) graph of $E$, denoted by $\hat{E}$,  is the graph obtained from $E$ by adding a new edge $e^*$ for each edge $e\in E^1$, with $r(e^*)= s(e)$ and $s(e^*)= r(e)$.  

A {\it row-finite graph} is a graph $E$ such that $|s^{-1}(v)| <\infty$ for all $v\in E^0$. The set $\Ereg$ of {\it regular vertices} of a row-finite graph is the set of vertices $v$ such that $s^{-1}(v)\ne \emptyset$. A {\it locally finite graph} is a graph $E$ such that both $s^{-1}(v)$ and $r^{-1}(v)$ are finite, for all $v\in E^0$.

\begin{definition}(\cite[Definition 2.1]{AG2} and \cite[Definition 4.1]{AE})
	A \emph{separated graph} is a pair $(E,C)$ where $E$ is a (directed) graph and $C=\displaystyle\cup_{v \in E^0} C_v$ in which $C_v$ is a partition of $s^{-1}(v)$ into pairwise disjoint nonempty subsets for each vertex $v$. If all the sets in $C$ are finite, we say that $(E,C)$ is a \emph{finitely separated graph}. This is automatically true when $E$ is row-finite. 
	
	A \emph{bipartite separated graph} is a separated graph $(E,C)$ such that $E^0= E^{0,0}\sqcup E^{0,1}$, and $s(e)\in E^{0,0}$, $r(e)\in E^{0,1}$ for each $e\in E^1$.  
 \end{definition}

Note that our notation concerning ranges and sources of edges is the same as the one from \cite{Haz13}, \cite{PrSurvey}, \cite{AAS} and \cite{AG2}, but it is distinct from the one used in \cite{AE}, \cite{AL} and other sources.   

We can now define Leavitt path algebras of separated graphs, following \cite{AG2}. We will consider through the paper algebras and $*$-algebras over an arbitrary but fixed coefficient field $K$, endowed with an involution $*$.  
Note that our definitions usually refer to {\it presentations of $*$-algebras in the category of $*$-algebras}.

\begin{definition}\label{defC*grafosep}
	The \emph{Leavitt path algebra} of a separated graph $(E,C)$ with coefficients in $K$, is the *-algebra $L(E,C)$ with generators $\{v, e\mid v\in E^0, e\in E^1\}$, subject to the following relations: 
	\begin{enumerate}
		\item[(V)] $vv' = \delta_{v,v'}v$ and $v= v^*$ for all $v,v'\in E^0$,
		\item[(E)] $s(e)e = e= er(e)$ for all $e\in E^1$,
		\item[(SCK1)] $e^*f = \delta _{e,f}r(e)$ for all $e,f\in X$, $X\in C$, and 
		\item[(SCK2)] $v= \sum_{e\in X} ee^*$ for every finite set $X \in C_v$, $v\in E^0$.
	\end{enumerate}
\end{definition}

Note that the path algebra $P_K(\hat{E})$ of the extended graph $\hat{E}$ of $E$, endowed with its canonical involution, is precisely the $*$-algebra defined by relations (V) and (E), so that $L(E,C)$ is the quotient of $P_K(\hat{E})$ by the $*$-ideal corresponding to the relations (SCK1) and (SCK2).  

We will need the normal form of elements of $L(E,C)$, which was obtained in \cite{AG2}.

\begin{definition}
	For two non-trivial paths $\mu,\nu \in \text{Path}(E)$ with $s(\mu)=s(\nu)=v$ we say that $\mu$ and $\nu$ are \emph{$C$-separated} if the initial edges of $\mu$ and $\nu$ belong to different sets $X,Y \in C_v$. 
\end{definition} 

\begin{definition}
	For each finite $X \in C$, we select an edge $e_X \in X$. Let $\mu,\nu \in \text{Path}(E)$ be two paths such that $r(\mu)=r(\nu)$ and let $e$ and $f$ be the terminal edges of $\mu$ and $\nu$, respectively. The path $\mu \nu^*$ is said to be \emph{reduced} if $(e,f) \neq (e_X,e_X)$ for every finite $X \in C$. In case either $\mu$ or $\nu$ has length zero then $\mu\nu^*$ is automatically reduced.  
\end{definition}

\begin{theorem}\cite[Corollary 2.8]{AG2}\label{basegrafosseparados}
	Let $(E,C)$ be a separated graph. Then the set of elements of the form $$\mu_1\nu_1^*\mu_2\nu_2^* \ldots \mu_n\nu_n^*, \qquad  \mu_i,\nu_i \in \text{Path}(E)$$ such that  $\nu_i$ and $\mu_{i+1}$ are $C$-separated paths for all $i \in \{1,\ldots,n-1\}$ and $\mu_i\nu_i^*$ is reduced for all $i \in \{1,\ldots,n\}$ forms a linear basis of $L(E,C)$. We call $\mu_1\nu_1 \cdots \mu_n\nu_n^*$ a $C$-separated reduced path. 
\end{theorem}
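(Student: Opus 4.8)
The plan is to prove the two containments separately: that the proposed $C$-separated reduced paths span $L(E,C)$, and that they are linearly independent. For spanning, recall that $L(E,C)$ is by definition the quotient of the path $*$-algebra $P_K(\hat{E})$ (relations (V), (E)) by the ideal generated by (SCK1) and (SCK2), and $P_K(\hat{E})$ is spanned by the paths in $\hat{E}$, i.e. by words in the generators $\{v,e,e^*\}$. Grouping each such word into maximal runs of edges from $E^1$ and maximal runs of edges from $(E^1)^*$, I can write any monomial in the shape $\mu_1\nu_1^*\mu_2\nu_2^*\cdots\mu_n\nu_n^*$ with all $\mu_i,\nu_i\in\text{Path}(E)$ (allowing length-zero paths at the two ends). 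So products of this form already span, and the content of the spanning direction is the reduction of an arbitrary such product to a linear combination of $C$-separated reduced ones.

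This reduction uses the two defining relations in exactly the two roles indicated by the statement. At a junction $\nu_i^*\mu_{i+1}$ whose adjacent edges $g$ (the initial edge of $\nu_i$) and $h$ (the initial edge of $\mu_{i+1}$) lie in a common block $X\in C$ — that is, when $\nu_i$ and $\mu_{i+1}$ are \emph{not} $C$-separated — relation (SCK1), $g^*h=\delta_{g,h}r(g)$, either kills the monomial (if $g\neq h$) or cancels the pair and glues $\mu_i\nu_i^*\mu_{i+1}\nu_{i+1}^*$ into a shorter product. Whenever a block $\mu_i\nu_i^*=p\,e_X e_X^*\,q^*$ fails to be reduced, its terminal edges are $(e_X,e_X)$ for some finite $X$, and relation (SCK2) lets me substitute $e_X e_X^*=s(e_X)-\sum_{e\in X,\,e\neq e_X}ee^*$, replacing the offending factor by $p\,q^*$ (strictly shorter) plus terms $p\,e e^*\,q^*$ of the same length whose terminal edge $e\neq e_X$ now makes the block reduced. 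I would fix a well-founded order refining length and, at equal length, using the distinguished edges $e_X$ to make the terminal-edge data decrease; each application of (SCK1) or (SCK2) strictly decreases this order, so the process terminates, and the irreducible outputs are precisely the $C$-separated reduced paths. This gives spanning.

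Linear independence is the crux, and I would obtain it via Bergman's Diamond Lemma. Orient (SCK1) and (SCK2) as rewriting rules on monomials of $P_K(\hat{E})$, namely $e^*f\mapsto\delta_{e,f}r(e)$ and $e_X e_X^*\mapsto s(e_X)-\sum_{e\neq e_X}ee^*$ (together with the idempotent rules from (V) and (E)), and check, using the order from the spanning step, that the system is terminating. The irreducible monomials are exactly the $C$-separated reduced paths, so once every overlap and inclusion ambiguity is shown resolvable, the Diamond Lemma yields that these monomials form a $K$-basis. The main obstacle is precisely this confluence check: the (SCK2) rule expands one factor into a sum, so overlaps between an (SCK2) application and an adjacent (SCK1) application (for instance in words containing $g^*e_X e_X^*$ or $e_X e_X^* h$) must be resolved by hand, and one must verify that the choices of distinguished edges $e_X$ create no unresolved ambiguity. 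As a fallback I would keep in reserve the representation-theoretic route: build an action of the generators on the $K$-vector space with basis the $C$-separated reduced paths, verify the relations (V), (E), (SCK1), (SCK2) directly, and read off independence by letting suitable vertex-seed vectors reach each basis element — confluence and a faithful representation being two faces of the same computation.
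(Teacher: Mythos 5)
Your proposal is correct in outline, but note first that the paper under review does not actually prove Theorem~\ref{basegrafosseparados}: it imports it from \cite[Corollary 2.8]{AG2}, so the relevant comparison is with the proof given there. That proof is structural rather than computational: Ara and Goodearl first realize $L(E,C)$ as the coproduct (amalgamated free product) over the vertex subalgebra $B=\bigoplus_{v\in E^0}Kv$ of the ordinary Leavitt path algebras $L(E_X)$, $X\in C$, and then obtain the basis by feeding the classical bases of the factors over $B$ --- in which the distinguished edges $e_X$ and the exclusion $(e,f)\neq(e_X,e_X)$ already appear --- into the normal-form theorem for such coproducts; the alternating words of that normal form, with consecutive letters from distinct factors, are precisely the $C$-separated reduced paths. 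Your route instead attacks the full presentation directly with Bergman's Diamond Lemma, and it does go through: the orientations $e^*f\mapsto\delta_{e,f}r(e)$ and $e_Xe_X^*\mapsto s(e_X)-\sum_{e\in X,\,e\neq e_X}ee^*$ (the latter only for finite $X$, which is consistent with the definition of ``reduced,'' since for infinite $X$ there is no relation and every junction is automatically reduced) have exactly the $C$-separated reduced paths as irreducible words; the order you sketch can be taken as length first, then letterwise comparison with $e_X$ strictly greatest within its (finite) block and letters from different blocks incomparable, which is multiplication-compatible and has DCC, as Bergman's lemma requires; and the overlaps you single out do resolve (for $g^*e_Xe_X^*$ with $g\in X$, both reduction routes give $0$ when $g\neq e_X$ and $e_X^*$ when $g=e_X$, and dually for $e_Xe_X^*h$ and $e_X^*e_Xe_X^*$). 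What the two approaches buy is different: yours is elementary and self-contained, handles spanning and independence in one stroke, but requires the by-hand resolution of all ambiguities, including the routine but lengthy ones involving the idempotent rules (V) and (E); the coproduct route of \cite{AG2} outsources that combinatorics to general free-product theory, and its structural content is reused downstream --- notably in Bergman-style computations of $\mon(L(E,C))$ such as \cite[Theorem 4.3]{AG2}, which the present paper invokes in the proof of Theorem~\ref{thm:vmonoid}. Your fallback of building a faithful representation on the span of the reduced words is, in effect, the classical way the coproduct normal form is itself verified, so the two routes ultimately converge on the same computation, as you observe.
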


For each edge $e$ of a separated graph $(E,C)$ we will denote by $X_e$ the unique element of $C$ such that $e\in X_e$. 

We are now ready for our definitions of the upper and lower path algebras of a bipartite separated graph. We first introduce these algebras using a presentation, and we show below in Proposition \ref{prop:LVandLW} that these are precisely the corner algebras of $L(E,C)$ corresponding to the upper and lower subsets $E^{0,0}$ and $E^{0,1}$ of $E^0$.

\begin{definition}\label{def:LV(E;C)}
	Let $(E,C)$ be a row-finite bipartite separated graph with $s(E^1) = E^{0,0}$, $r(E^1) = E^{0,1}$. Let $LV(E,C)$ be the universal $*$-algebra with generators $P_V \sqcup T$ where $P_V = \{ p_v\}_{v \in E^{0,0}}$ and $T = \{\tau (e,f)\}_{\{e,f \in E^1 \mid r(e) = r(f)\}}$, and subject to the relations
	\begin{enumerate}
		\item[(V')] $p_vp_{v'} = \delta_{v,v'}p_v$ and $p_v^*=p_v$ for all $v,v'\in E^{0,0}$,
	     \item[(T)] $\tau(e,f)^* = \tau(f,e)$,
		\item[(E')] $\tau(e,f) \cdot p_{s(f)} = p_{s(e)} \cdot \tau(e,f) = \tau(e,f)$,
		\item[(SCK1')] $\tau(e,f) \cdot \tau(g,h) = \delta_{f,g} \tau(e,h)$ for $f,g \in E^1$ belonging to the same set $Y \in C$,
		\item[(SCK2')] $p_v = \sum_{e \in Y} \tau(e,e)$ for all $v \in E^{0,0}$ and all $Y \in C_v$.
	\end{enumerate}
\end{definition} 

\begin{definition}\label{def:LW(E,C)}
	Let $(E,C)$ be a row-finite bipartite separated graph with $s(E^1) = E^{0,0}$, $r(E^1) = E^{0,1}$. Let $LW(E,C)$ be the universal $*$-algebra with generators $P_W \sqcup R$ where $P_W = \{ p_w\}_{w \in E^{0,1}}$ and $R = \{\rho(e,f)\}_{\{e,f \in E^1 \mid s(e) = s(f) \text{ and } X_e\ne X_f\}}$, and subject to the relations
	\begin{enumerate}
		\item[(V'')] $p_wp_{w'} = \delta_{w,w'}p_w$ and $p_w^*=p_w$ for all $w,w'\in E^{0,1}$,
		\item[(R)] $\rho(e,f)^* = \rho(f,e)$;
		\item[(E'')] $\rho(e,f) \cdot p_{r(f)} = p_{r(e)} \cdot \rho(e,f) = \rho(e,f)$,
		\item[(SCK1'')] 
		Suppose that $e,h\in E^1$ with $v:=s(e)= s(h)$ and $X\in C_v$ with $X\ne X_e$ and $X\ne X_h$. Then:
		$$\sum_{f\in X}\rho(e,f) \cdot \rho(f,h) = \begin{cases}
		\delta_{e,h}p_{r(e)} & \text{ if } X_e=X_h \\
		\rho(e,h) & \text{ if } X_e\ne X_h .
		\end{cases}$$
	\end{enumerate}
\end{definition} 

We will make use of the multiplier algebra $M(A)$ of an algebra $A$, see for instance \cite{APe} and \cite[Chapter 7]{ExelBook}, as a convenient way of defining our algebras. Note that $M(A)=A$ if $A$ is unital.  

Let $(E,C)$ be a row-finite bipartite separated graph such that $s(E^1)= E^{0,0}$ and $r(E^1)= E^{0,1}$. 
Let $V= \sum_{v\in E^{0,0}} v \in M(L(E,C))$ and $W=\sum_{w\in E^{0,1}} w\in M(L(E,C))$. Since the finite sums of vertices give a family of local units of $L(E,C)$, one can easily show that $V$ and $W$ exist in $M(L(E,C))$ and that $V+W=1$. It follows readily that $VL(E,C)V$ is linearly spanned by all the paths $\mu $ in $\hat{E}$ such that $s(\mu)\in E^{0,0}$ and $r(\mu)\in E^{0,0}$. A similar description holds for $LW(E,C)$. 

\begin{proposition}
	\label{prop:LVandLW}
	We have natural $*$-isomorphisms $\varphi_V\colon LV(E,C)\to VL(E,C)V$ and $\varphi_W\colon LW(E,C)\to WL(E,C)W$ such that 
	$$\varphi_V(p_v)= v, \quad \varphi_V(\tau (e,f))= ef^*,\quad \varphi_W(p_w)= w,\quad \varphi_W(\rho (e,f))= e^*f.$$
\end{proposition}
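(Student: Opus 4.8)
The plan is to obtain both maps from the universal properties of $LV(E,C)$ and $LW(E,C)$, check surjectivity using the spanning description of the corners recalled just before the statement, and then prove injectivity by comparing a spanning set of each universal algebra with the linear basis of $L(E,C)$ provided by Theorem \ref{basegrafosseparados}.

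First I would verify that $\varphi_V$ is a well-defined $*$-homomorphism. Since $LV(E,C)$ is universal on $P_V\sqcup T$ subject to (V')--(SCK2'), it suffices to check that the elements $v$ and $ef^*$ of $M(L(E,C))$ satisfy these relations. Relations (V'), (T), (E') are immediate from (V), (E). For (SCK1'), if $f,g$ lie in the same $Y\in C$ then $f^*g=\delta_{f,g}r(f)$ by (SCK1), so $ef^*gh^*=\delta_{f,g}eh^*$; and (SCK2') is literally (SCK2) after applying $\varphi_V$, using that $Y$ is finite. The images lie in $VL(E,C)V$ because $s(e),s(f)\in E^{0,0}$ force $V(ef^*)V=ef^*$. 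For $\varphi_W$ the only nonroutine relation is (SCK1''): applying $\varphi_W$ to its left-hand side gives $\sum_{f\in X}e^*ff^*h=e^*\bigl(\sum_{f\in X}ff^*\bigr)h=e^*vh=e^*h$ by (SCK2), and then $e^*h=\delta_{e,h}r(e)$ or $e^*h$ according to whether $X_e=X_h$ or not, by (SCK1), which is exactly the stated case distinction. Thus both $\varphi_V$ and $\varphi_W$ exist.

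Surjectivity is short. Since $E$ is bipartite, every nontrivial forward path in $E$ has length one, so a path in $\hat{E}$ with source and range in $E^{0,0}$ is either a vertex $v\in E^{0,0}$ or an alternating word $e_1f_1^*e_2f_2^*\cdots e_nf_n^*$ with $e_i,f_i\in E^1$. As recalled before the statement, such words span $VL(E,C)V$; each equals $\varphi_V(\tau(e_1,f_1)\cdots\tau(e_n,f_n))$ and each vertex equals $\varphi_V(p_v)$, so $\varphi_V$ is onto. The mirror argument with words $e_1^*f_1e_2^*f_2\cdots$ gives surjectivity of $\varphi_W$.

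The main work is injectivity, which I would obtain by exhibiting a spanning set of $LV(E,C)$ that $\varphi_V$ carries bijectively onto a subset of the basis of Theorem \ref{basegrafosseparados}. Using (E') (which annihilates products of generators with mismatched sources), (SCK1') (which collapses consecutive $\tau$'s lying in a common $Y\in C$), and (SCK2') (used exactly as (SCK2) is used in $L(E,C)$, to eliminate any distinguished factor $\tau(e_X,e_X)$ in favour of $p_v-\sum_{e\in X,\,e\ne e_X}\tau(e,e)$), every element of $LV(E,C)$ becomes a linear combination of the $p_v$ and of words $\tau(e_1,f_1)\cdots\tau(e_n,f_n)$ in which $s(f_i)=s(e_{i+1})$ and $X_{f_i}\ne X_{e_{i+1}}$ for all $i$ and no factor is of the excluded form $\tau(e_X,e_X)$. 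Under $\varphi_V$ such a word maps to $e_1f_1^*\cdots e_nf_n^*$, and the two imposed conditions are precisely the requirements that consecutive $f_i,e_{i+1}$ be $C$-separated and that each $e_if_i^*$ be reduced; hence the image is a $C$-separated reduced path with endpoints in $E^{0,0}$, i.e. a basis element. Distinct admissible words give distinct basis elements, so $\varphi_V$ sends the spanning set injectively onto a linearly independent set; therefore the spanning set is in fact a basis and $\varphi_V$ is injective. The statement for $\varphi_W$ follows by the analogous reduction, now using (SCK1'') in place of (SCK1')+(SCK2'), matching words $\rho(e_1,f_1)\cdots$ with paths $e_1^*f_1\cdots$. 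I expect the termination and completeness of this rewriting to be the only real obstacle: one must check, exactly as in the derivation of the normal form in \cite{AG2}, that iterating (SCK1'), (SCK2') (resp. (SCK1'')) terminates and yields only admissible reduced words, so that the proposed set genuinely spans; once that is secured, injectivity is immediate from linear independence of the basis, and $\varphi_V,\varphi_W$ are the desired $*$-isomorphisms.
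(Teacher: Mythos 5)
Your proposal is correct and takes essentially the same route as the paper's own proof: verify the relations to get the $*$-homomorphisms, note surjectivity from the spanning description of the corners, and prove injectivity by rewriting elements of the universal algebra into a spanning set of admissible words that the map sends to pairwise distinct $C$-separated reduced paths, which are linearly independent by Theorem \ref{basegrafosseparados}. The only cosmetic differences are that you work out $\varphi_V$ while the paper works out $\varphi_W$ (each deferring the other as symmetric), and the termination/completeness of the rewriting that you flag is likewise left implicit in the paper, resting on the normal-form derivation of \cite{AG2}.
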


\begin{proof}
We will only show the isomorphism $LW(E,C)\cong WL(E,C)W$. The other isomorphism is proved in the same way.

One can easily see that the assignments $\varphi_W (p_w)= w$ and $\varphi_W (\rho(e,f)) = e^*f$ give a well-defined $*$-algebra homomorphism 
$$\varphi_W \colon LW(E,C)\to WL(E,C)W.$$
Clearly $\varphi_W$ is surjective. To show that $\varphi_W$ is injective we observe that using the defining relations of $LW(E,C)$, we can write each element of $LW(E,C)$ as a linear combination of terms of the form
$$\rho(e_1,f_1)\rho(e_2,f_2)\cdots \rho(e_n,f_n)$$
such that $s(e_i)=s(f_i)$, $X_{e_i}\ne X_{f_i}$ for $i=1,\dots ,n$, $r(f_i)= r(e_{i+1})$ and $f_ie_{i+1}^*$ is reduced for $i=1,\ldots ,n-1$.

Now suppose that $\alpha $ is a nonzero element of $LW(E,C)$ and that $\alpha = \sum \lambda_i \alpha_i$, where $\lambda_i\in K\setminus \{0\}$ and $\alpha_i$ are pairwise distinct terms as described in the above paragraph. Then $\varphi_W (\alpha )= \sum \lambda _i \varphi_W (\alpha_i)$ and $\varphi_W (\alpha_i)$ are pairwise distinct $C$-separated reduced paths in $L(E,C)$. Since these elements are linearly independent in $L(E,C)$, we conclude that $\varphi_W (\alpha )\ne 0$.  	
	\end{proof}

In view of Proposition \ref{prop:LVandLW}, we will identify the algebras $LV(E,C)$ and $VL(E,C)V$, and also the algebras $LW(E,C)$ and $WL(E,C)W$. 

We recall now the definitions of weighted graph and of Leavitt path algebra of a weighted graph, following \cite{PrSurvey}.

An {\it isolated vertex} of a graph $E$ is a vertex $v$ such that $s^{-1}(v)= r^{-1}(v)=\emptyset$. To avoid trivialities, we will restrict attention to graphs with no isolated vertices.

\begin{definition}
	\label{def:weighted-graph}
	A {\it weighted graph} is a pair $(E,\omega)$ where $E$ is a row-finite  graph with no isolated vertices,  and $\omega \colon E^1\to \N$ is a map. For each $v\in \Ereg$, set 
	$\omega (v):= \text{max}\{ \omega (e): e\in s^{-1}(v)\}$ and set $\omega (v)=0$ if $v$ is a sink. We say that $(E,\omega)$ is a {\it vertex weighted graph} if $w(e)=w(v)$ for all $e\in s^{-1}(v)$.  
\end{definition}

\begin{definition}
	\label{def:LPAweighted-graph}
	Let $(E,\omega)$ be a weighted graph and $K$ a field. The {\it weighted Leavitt path algebra} of $(E,\omega)$ is the $*$-algebra $L(E,\omega)$ generated  by $\{v,e_i,\mid v\in E^0, e\in E^1, 1\le i \le \omega (e)\}$ subject to the relations
	\begin{enumerate}
		\item $uv = \delta_{u,v}$ and $v= v^*$ for all $u,v\in E^0$,
		\item $s(e) e_i= e_i = e_ir(e)$, where $e\in E^1$, $1\le i\le \omega (e)$,
		\item $\sum_{e\in s^{-1}(v)} e_ie_j^* = \delta_{ij} v$, where $v\in E^0_{\text{reg}}$,  and $1\le i,j \le \omega (v)$,
		\item $\sum_{1\le i \le \omega (v)} e_i^*f_i = \delta _{e,f}r(e)$, where $v\in E^0_{\text{reg}}$ and $e,f\in s^{-1}(v)$.
	\end{enumerate}
 In relations (3) and (4) we set $e_i$ and $e_i^*$ zero whenever $i>\omega (e)$. 
\end{definition}

Now we consider a vertex weighted graph $(E,\omega)$ and we build a bipartite separated graph $(E(\omega),C(\omega ))$ such that $L(E,\omega)\cong LW(E(\omega),C(\omega))$.

\begin{definition}
	\label{def:separated-graph-of-weighted-graph}
	Let $(E,\omega)$ be a vertex weighted graph. Define a bipartite separated graph
	$(E(\omega),C(\omega))$ as follows. Let $V_0$ and $V_1$ be two copies of $E^0$, with bijections $E^0\to V_i$ given by $v\mapsto v_i$, for $i=0,1$. Set $E(\omega)^{0,0}=\{v_0\mid v\in \Ereg \}$, $E(\omega)^{0,1}=V_1$ and $E(\omega)^0= E(\omega )^{0,0}\sqcup E(\omega )^{0,1}$. For each
	$e\in E^1$, we define an edge $\tilde{e}\in E(\omega)^1$ such that $s(\tilde{e})= s(e)_0$ and $r(\tilde{e})= r(e)_1$. We set $X_v= \{\tilde{e}\mid e\in s^{-1}(v)\}$ for $v\in \Ereg $. In addition, we define another set of edges $Y_v= \{h(v,i): 1\le i \le \omega (v)\}$ for each $v\in \Ereg$, where  $s(h(v,i))= v_0$ and $r(h(v,i))= v_1$ for all $v\in \Ereg $ and all $1\le  i\le \omega (v)$. Finally, we define $C(\omega)_{v_0}= \{X_v,Y_v\}$ for each $v\in \Ereg$, and $E(\omega )^1= \bigsqcup_{v\in \Ereg} (X_v\sqcup Y_v)$. 
	
We call $(E(\omega ),C(\omega ))$ the {\it separated graph} of the vertex weighted graph $(E,\omega )$. Observe that, since $E$ has no isolated vertices, we have $s(E^1)= E^{0,0}$ and $r(E^1)= E^{0,1}$.  
	\end{definition}

\begin{theorem}
	\label{thm:isovertexweighted}
	Let $(E,\omega)$ be a vertex weighted row-finite graph. Then there is a unique isomorphism of $*$-algebras 
	$$\Phi = \Phi_{(E,\omega )}\colon L(E,\omega ) \longrightarrow LW(E(\omega ),C(\omega ))$$ such that $\Phi (v)= p_v= v_1$ for $v\in E^0$ and $\Phi (e_i) = \rho(h(s(e),i),\tilde{e})= h(s(e),i)^*\tilde{e}$, for $e\in E^1$ and $1\le i\le \omega (v)$.  
\end{theorem}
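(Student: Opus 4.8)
The plan is to construct the isomorphism $\Phi$ by exhibiting a $*$-homomorphism in each direction and showing they are mutually inverse. Since $L(E,\omega)$ is given by a universal presentation, the cleanest route is to verify that the proposed images $\Phi(v) = p_v$ and $\Phi(e_i) = \rho(h(s(e),i),\tilde e)$ satisfy the defining relations (1)--(4) of Definition \ref{def:LPAweighted-graph}, which yields a well-defined $*$-homomorphism $\Phi$ by universality; and conversely to define a candidate inverse $\Psi$ on the generators $p_w, \rho(e,f)$ of $LW(E(\omega),C(\omega))$ and check the relations (V''), (R), (E''), (SCK1'') of Definition \ref{def:LW(E,C)}. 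Uniqueness is automatic once existence is established, since $\Phi$ is prescribed on a generating set.

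First I would set up notation using Proposition \ref{prop:LVandLW} to identify $LW(E(\omega),C(\omega))$ with the corner $W L(E(\omega),C(\omega)) W$, so that $\rho(e,f)$ becomes $e^* f$ inside the honest Leavitt path algebra of the separated graph. Working inside $L(E(\omega),C(\omega))$ is advantageous because there I may freely use relations (V), (E), (SCK1), (SCK2) together with the normal-form basis of Theorem \ref{basegrafosseparados}. With this identification, $\Phi(e_i) = h(s(e),i)^* \tilde e$, and I would verify each weighted relation by direct computation: relation (2) follows from (E); relation (4), namely $\sum_{1\le i\le \omega(v)} \Phi(e_i)^* \Phi(f_i) = \delta_{e,f} r(e)_1$, expands to $\sum_i \tilde e^* h(v,i) h(v,i)^* \tilde f$, and applying (SCK2) to $Y_v = \{h(v,i)\}$ gives $\sum_i h(v,i) h(v,i)^* = v_0$, reducing the sum to $\tilde e^* v_0 \tilde f = \tilde e^* \tilde f = \delta_{e,f} r(e)_1$ by (SCK1) on $X_v$; relation (3), $\sum_{e\in s^{-1}(v)} \Phi(e_i)\Phi(e_j)^* = \delta_{ij} v_0$, expands to $\sum_{e\in X_v} h(v,i)^* \tilde e \tilde e^* h(v,j)$, where (SCK2) applied to $X_v$ collapses $\sum_e \tilde e\tilde e^* = v_0$, leaving $h(v,i)^* v_0 h(v,j) = h(v,i)^* h(v,j) = \delta_{ij} v_1$ by (SCK1) on $Y_v$. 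Here one must mind the convention that $\Phi(e_i)=0$ when $i > \omega(e)$, which is consistent because the vertex-weighted hypothesis forces $\omega(e)=\omega(v)$ for all $e\in s^{-1}(v)$, so every $Y_v$-index $i$ with $1\le i\le\omega(v)$ is legitimate for every such $e$.

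For the inverse direction I would define $\Psi\colon LW(E(\omega),C(\omega))\to L(E,\omega)$ by $\Psi(p_{v_1}) = v$ and, on the generators $\rho(a,b)$ with $s(a)=s(b)=v_0$ and $X_a\ne X_b$, by sending the mixed pair $\rho(h(v,i),\tilde e)$ to $e_i^*\cdot(\text{appropriate correction})$---more precisely, matching $\Phi$ forces $\Psi(\rho(h(v,i),\tilde e)) = e_i^*$ up to the involution convention and $\Psi(\rho(\tilde e, h(v,i))) = e_i$, while same-block products never occur among generators since $X_a\ne X_b$ is required. The task is to check that $\Psi$ respects (SCK1''); the two cases $X_e=X_h$ and $X_e\ne X_h$ of that relation translate exactly into the weighted relations (3) and (4) summed over the intermediate block $X$, so the verification is the mirror image of the computation above. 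Having both homomorphisms agree with the stated formulas on generators, mutual invertibility is immediate on generators and hence everywhere.

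**The main obstacle** I anticipate is \emph{not} the relation-checking, which is routine once the corner identification is in place, but rather the bookkeeping around the index convention ``$e_i=0$ for $i>\omega(e)$'' and the precise correspondence between $LW$-generators $\rho(a,b)$ and weighted generators. Because $LW$ generators require $X_a\ne X_b$, only the \emph{mixed} pairs (one leg in $X_v$, one in $Y_v$) carry information, and I must confirm that these mixed $\rho$'s, together with relation (SCK1''), generate all of $LW(E(\omega),C(\omega))$ and that the map $\Psi$ is well-defined on them without hidden inconsistencies---in particular that products $\rho(\tilde e,h(v,i))\rho(h(v,i),\tilde f)$ summed over $i$ reproduce the $X_e=X_h$ branch correctly. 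In the vertex-weighted setting the fact that every edge out of $v$ carries the \emph{same} weight $\omega(v)$ is precisely what makes $|Y_v|=\omega(v)$ match the number of Cuntz--Krieger indices, so this hypothesis is the structural linchpin that must be invoked at exactly the right place.
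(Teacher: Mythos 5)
Your proposal is correct and takes essentially the same route as the paper's proof: verify the four weighted relations for $\Phi$ inside $L(E(\omega),C(\omega))$ via the corner identification $\rho(e,f)=e^*f$, using (SCK2) on $X_v$ and $Y_v$ followed by (SCK1), then define the inverse $\Psi$ on the mixed generators (which are indeed the only ones, since each $C(\omega)_{v_0}$ has exactly two blocks) and check (SCK1''), whose verification mirrors the forward computations. Two cosmetic slips to fix: matching $\Phi$ forces $\Psi(\rho(h(v,i),\tilde e))=e_i$ (not $e_i^*$), and since $|C(\omega)_{v_0}|=2$ the branch $X_e\ne X_h$ of (SCK1'') never occurs here --- it is the two sub-cases of $X_e=X_h$ (both edges in $Y_v$, summing over $X_v$; both in $X_v$, summing over $Y_v$) that reproduce weighted relations (3) and (4) respectively, exactly as in the paper.
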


\begin{proof}
Write $(F,D)= (E(\omega ),C(\omega))$, $A=L(E,\omega)$ and $B=LW(E(\omega ),C(\omega))$. To show that $\Phi$ gives a well-defined $*$-algebra homomorphism, we need to check that the defining relations of $A=L(E,\omega)$ are satisfied in $B$. 
It is quite easy to show that relations (1) and (2) in Definition \ref{def:LPAweighted-graph} are preserved by $\Phi$. To show that (3) is also preserved, take $v\in E_{\text{reg}}^0$ and $1\le i,j\le \omega (v)$. We then have
\begin{align*}
	\sum _{e\in s^{-1}(v)} \Phi (e_i) \Phi (e_j)^* & = \sum _{e\in s^{-1}(v)} h(v,i)^*\tilde{e} \tilde{e}^* h(v,j) \\
	& = h(v,i)^*\Big( \sum _{e\in s^{-1}(v)} \tilde{e} \tilde{e}^*\Big) h(v,j) \\
	& = h(v,i)^*h(v,j) \\
	& = \delta_{i,j}v_1=\delta_{i,j} \Phi (v).
\end{align*}  
For (4), let $e,f\in s^{-1}(v)$, with $v\in E^0_{\text{reg}}$. Then we have
\begin{align*}
\sum _{1\le i\le \omega (v)} \Phi (e_i)^* \Phi (f_i) & = \sum _{1\le i\le \omega (v)} \tilde{e}^*h(v,i)h(v,i)^*\tilde{f}  \\
& = \tilde{e}^*\Big( \sum _{1\le i\le \omega (v)} h(v,i)h(v,i)^*\Big) \tilde{f}  \\
& = \tilde{e}^*\tilde{f} \\
& = \delta_{e,f}r(e)_1=\delta_{e,f} \Phi (r(e)).
\end{align*} 
Hence we have a well-defined $*$-homomorphism $\Phi\colon A\to B$. To build the inverse of $\Phi$, consider the map $\Psi \colon B\to A$ defined by 
$$\Psi (p_v) = v \quad (v\in E^0), \qquad \Psi (\rho(h(v,i),\tilde{e})) = e_i  \quad (e\in s^{-1}(v)).$$
	Here we observe that for each $v\in \Ereg$, we have $|D_{v_0}| = 2 $. Therefore, it follows from Definition \ref{def:LW(E,C)} that we only have  generators of the form $\rho(h(v,i),\tilde{e})$ and $\rho(\tilde{e}, h(v,i))=\rho(h(v,i),\tilde{e})^*$ for $e\in s^{-1}(v)$. Therefore to define $\Psi$ as a $*$-homomorphism, it is enough to define it on the given generators. 
	We need to show the preservation of the defining relations of $LW(F,D)$. We will only deal with (SCK1''). Let $v\in \Ereg$. There are two cases. Suppose first that $e=h(v,i)$ and  $h= h(v,j)$ for $1\le i,j\le \omega (v)$.
	Then the unique option for $X$ in (SCK1'') is $X= X_v$, and so
	\begin{align*}
\sum_{\tilde{f}\in X_v} & \Psi (\rho(h(v,i),\tilde{f})) \cdot  \Psi (\rho(\tilde{f},h(v,j))) = \sum_{\tilde{f}\in X_v} \Psi (\rho (h(v,i),\tilde{f})) \cdot  \Psi (\rho(h(v,j),\tilde{f}))^* \\
& = \sum_{f\in s^{-1}(v)} f_if_j^* = \delta_{i,j}v = \Psi (\delta_{i,j} p_{r(h(v,i))}). 
	\end{align*} 
	We now consider the second case. In this case, we have two edges $\tilde{e},\tilde{h}\in X_{v}$, that is, $e,f\in E^1$, with $s(e)=s(h)= v\in E^0$, and the unique possible value for $X$ in (SCK1'') is $X=Y_{v}$. We have
			\begin{align*}
	\sum_{1\le i \le \omega (v)}  & \Psi (\rho(\tilde{e}, h(v,i))) \cdot  \Psi (\rho(h(v,i),\tilde{h})) =  \sum_{1\le i\le \omega (v)} \Psi (\rho(h(v,i),\tilde{e}))^* \cdot  \Psi (\rho(h(v,i),\tilde{h}))  \\
	& = \sum_{1\le i \le \omega (v)} e_i^*h_i = \delta_{e,h}r(e) = \Psi (\delta_{e,h} p_{r(\tilde{e})}). 
	\end{align*} 
	 Hence $\Psi$ is a well-defined $*$-homomorphism. It is clear that $\Psi$ and $\Phi$ are mutually inverse. This concludes the proof.  
\end{proof}

\section{The algebra $L_1(E,\omega )$}
\label{sect:algebraL1}

In this section we will introduce a new $*$-algebra $L_1(E,\omega )$ associated to a row-finite weighted graph. This algebra is a certain quotient of $L(E,\omega)$, and has the property of being generated by partial isometries. This is not the case in general for the $*$-algebra $L(E,\omega )$.  We will show that if the graph $E$ is locally finite, then $L_1(E,\omega)$ is $*$-isomorphic to the {\it upper Leavitt path algebra} of a finitely separated graph. 

\begin{definition}
	\label{def:algebraL1}
	Let $(E,\omega)$ be a weighted graph and $K$ a field. The $*$-algebra $L_1(E,\omega)$ is the free $*$-algebra generated by $\{v,e_i\mid v\in E^0, e\in E^1, 1\le i \le \omega (e)\}$ subject to the relations
	\begin{enumerate}
		\item $uv = \delta_{u,v}$ and $v= v^*$ for all $u,v\in E^0$,
		\item $s(e) e_i= e_i = e_ir(e)$, where $e\in E^1$, $1\le i\le \omega(e)$,
		\item $e_ie_j^* =0$ for  all $e\in E^1$ and all $1\le i,j\le \omega (e)$ with $i\ne j$, 
		\item $e_i^*f_i = 0$ for all $e,f\in s^{-1}(v)$ with $e\ne f$, $v\in E^0$,  $1\le i\le \text{min}\{\omega (e),\omega (f)\}$.	
		\item $\sum_{e\in s^{-1}(v), \omega (e)\ge i} e_ie_i^* =  v$, where $v\in E^0_{\text{reg}}$,  and $1\le i \le \omega (v)$,
		\item $\sum_{1\le i \le \omega (e)} e_i^*e_i = r(e)$ for all $e\in E^1$. 
	\end{enumerate}
\end{definition}

\begin{remark}
	\label{rem:remarksondefinitionL1Ew}
	\begin{enumerate}
		\item 	Observe that relations (3) with $i\ne j$ and (4) with $e\ne f$ in Definition \ref{def:LPAweighted-graph} are automatically satisfied in $L_1(E,\omega )$ because of relations (3) and (4) above. Therefore we have
		$$L_1(E,\omega ) \cong L(E,\omega)/I_0,$$
		where $I_0$ is the $*$-ideal of $L(E,\omega )$ generated by the elements $e_i^*e_j$ for $e\in E^1$ and $1\le i\ne j\le \omega (e)$, and 
		$e_i^*f_i$ for $e,f\in s^{-1}(v)$ with $e\ne f$, $v\in E^0$,  $1\le i\le \text{min} \{ \omega (e), \omega (f)\}$.
		\item Notice that the elements $e_i$, for $e\in E^1$ and $1\le i\le \omega (e)$ are partial isometries in $L_1(E,\omega )$, that is, $e_ie_i^*e_i= e_i$. This follows by either right multiplying relation (5), or left multiplying relation (6), by $e_i$, and using relations (4) or (3) accordingly.  
	\end{enumerate}
\end{remark}

Let $(E,\omega)$ be a weighted graph and let $(E,\omega^M)$ be the unique vertex weighted graph such that $\omega^M(v)=\omega (v)$ for all $v\in E^0_{\text{reg}}$. Then $L(E,\omega)$ is the quotient $*$-algebra of $L(E,\omega^M)$ by the $*$-ideal generated by $e_j$, for $\omega (e)<j\le \omega (v)$, for each $v\in E^0$ and $e\in s^{-1}(v)$. The corresponding quotient $*$-algebra of $LW(E(\omega^M), C(\omega^M))$ is an object which cannot be exactly modeled with a separated graph.

However, through the consideration of a related graph, we will show that the $*$-algebra  $L_1(E,\omega) = L(E,\omega)/I_0$ is an {\it upper} Leavitt path algebra of a bipartite separated graph.

We need a definition from \cite{AE} (see also \cite{AL}).

\begin{definition}\label{definition-F.infty.and.others}
	Let $(E,C)$ be any locally finite bipartite separated graph, and write
	$$C_u = \{X_1^u,\ldots,X_{k_u}^u\}$$
	for all $u \in E^{0,0}$. Then the \textit{$1$-step resolution} of $(E,C)$ is the locally finite bipartite separated graph denoted by $(E_1,C^1)$, and defined by
	\begin{itemize}
		\item $E_1^{0,0} := E^{0,1}$ and $E_1^{0,1} := \{v(x_1,\ldots,x_{k_u}) \mid u \in E^{0,0}, x_j \in X_j^u\}$.
		\item $E^1 := \{\alpha^{x_i}(x_1,\ldots,\widehat{x_i},\ldots,x_{k_u}) \mid u \in E^{0,0}, i = 1,\ldots,k_u, x_j \in X_j^u\}$.
		\item $s(\alpha^{x_i}(x_1,\ldots,\widehat{x_i},\ldots,x_{k_u})) := r(x_i)$ and $r(\alpha^{x_i}(x_1,\ldots,\widehat{x_i},\ldots,x_{k_u})) := v(x_1,\ldots,x_{k_u})$.
		\item For $v\in E_1^{0,0}=E^{0,1}$, $C^1_v := \{X(x) \mid x \in r^{-1}(v)\}$, where
		$$X(x_i) := \{\alpha^{x_i}(x_1,\ldots,\widehat{x_i},\ldots,x_{k_u}) \mid x_j \in X_j^u \text{ for } j \ne i\}.$$
	\end{itemize}
	A sequence of locally finite bipartite separated graphs $\{(E_n,C^n)\}_{n \ge 0}$ with $(E_0,C^0) := (E,C)$ is then defined inductively by letting $(E_{n+1},C^{n+1})$ denote the $1$-step resolution of $(E_n,C^n)$. Finally set $(F_n,D^n)= \bigcup_{i=0}^n (E_i,C^i)$ and let $(F_{\infty},D^{\infty})$ be the infinite layer graph
	$$(F_{\infty},D^{\infty}) :=  \bigcup_{n=0}^{\infty} (F_n,D^n) =  \bigcup_{n=0}^{\infty} (E_n,C^n).$$
	It is clear by construction that $(F_{\infty},D^{\infty})$ is a separated Bratteli diagram in the sense of \cite[Definition 2.8]{AL}, called the {\it separated Bratteli diagram} of the locally finite bipartite graph $(E,C)$. (Note that only the case of a finite bipartite separated graph $(E,C)$ was considered in \cite{AE} and \cite{AL}. However the extension to locally finite bipartite separated graphs is straightforward.)
\end{definition}

By \cite[Theorem 5.1]{AE}, there is a canonical surjective $*$-homomorphism 
$$\phi _0\colon
L(E, C)\twoheadrightarrow L(E_{1}, C^{1}),$$
which is defined by 
$$\phi_0 (u) =  \sum _{(x_1,\dots ,x_{k_u})\in \prod_{i=1}^{k_u} X^u_i} v(x_1,\dots
,x_{k_u}), $$ where $C_u=\{ X_1^u,\dots , X^u_{k_u} \}$ for $u\in E^{0,0}$, $\phi
_0(w)=w$ for all $w\in E^{0,1}$, and 
$$\phi _0(x_i)= \sum _{x_j\in X^u_j, j\ne i} (\alpha
^{x_i}(x_1,\dots , \widehat{x_i},\dots ,x_{k_u}))^* \, $$
for an arrow $x_i\in X^u_i$. 

In order to state our next proposition, we need to extend the definition of the generators of $LW(E,C)$ for a bipartite separated graph $(E,C)$ to the case where $e,f$ belong to the same set of the partition $C$. 
Concretely, we set for $e,f\in X\in C$:
\begin{equation}
\label{eq:extendingrho}
\rho (e,f) = \delta_{e,f} r(e),\qquad (e,f\in X, X\in C).	
\end{equation}

\begin{proposition}
	\label{prop:kernel-of1step}
	The kernel of the $*$-homomorphism $\phi_0| \colon LW(E,C)\to LV(E_1,C^1)$ is the $*$-ideal $I$ of $LW(E,C)$ generated by the elements 
	$$\rho (e,f)\rho (f,g)\rho (g,h)- \rho (e,g)\rho(g,f)\rho (f,h)$$
	for all $e,f,g,h\in E^1$ such that $s(e)=s(f)= s(g)= s(h)$. Here $\rho(e,f)$ are the canonical generators of $LW(E,C)$ as given in Definition \ref{def:LW(E,C)} when $X_e\ne X_f$, and the elements given by \eqref{eq:extendingrho} when $X_e=X_f$. 
\end{proposition}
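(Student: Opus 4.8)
The plan is to identify $\Ker(\phi_0|_{LW(E,C)})$ by comparing the two descriptions of $LW(E,C)$ and $LV(E_1,C^1)$ in terms of the corner algebras. Recall that under Proposition \ref{prop:LVandLW} we identify $LW(E,C)$ with $WL(E,C)W$ and $LV(E_1,C^1)$ with $V_1L(E_1,C^1)V_1$, where $V_1=\sum_{v\in E_1^{0,0}}v=\sum_{w\in E^{0,1}}w=W$, so the source and target of $\phi_0|$ sit inside $L(E,C)$ and $L(E_1,C^1)$ as the $W$-corners. Since $\phi_0(w)=w$ for $w\in E^{0,1}$, the restriction $\phi_0|$ really is the corner-to-corner map. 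First I would check that the stated generators $\rho(e,f)\rho(f,g)\rho(g,h)-\rho(e,g)\rho(g,f)\rho(f,h)$ genuinely lie in the kernel; this is a direct computation using $\phi_0(x_i)=\sum_{x_j\in X_j^u,\,j\ne i}(\alpha^{x_i}(\dots))^*$ and the relations (SCK1') defining $LV(E_1,C^1)$, writing $\phi_0(\rho(e,f))=\phi_0(e^*f)=\phi_0(e)^*\phi_0(f)$ and expanding. The symmetry $e\leftrightarrow g$ in the middle slot is what produces the relation in $L(E_1,C^1)$, so both words map to the same element and the difference dies.

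**Reducing to a statement about ideals in the full algebra.** The cleaner route is to invoke \cite[Theorem 5.1]{AE}, which presumably identifies $\Ker(\phi_0\colon L(E,C)\to L(E_1,C^1))$ as the ideal $J$ of $L(E,C)$ generated by analogous "commutator-type" elements $e^*f\cdot f^*g\cdot g^*h - e^*g\cdot g^*f\cdot f^*h$ for edges with common source (or whatever the explicit generators given there are). Assuming that result, the task becomes purely the \emph{compression} problem: show that $\Ker(\phi_0|_{LW(E,C)})=W\,\Ker(\phi_0)\,W = WJW$ equals the ideal $I$ of $LW(E,C)=WL(E,C)W$ generated by the listed elements. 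That the kernel of the compressed map is the compression of the kernel is formal, because $\phi_0(W)=W$ and $\phi_0$ restricts to $\phi_0|\colon WL(E,C)W\to WL(E_1,C^1)W$ with kernel $WL(E,C)W\cap J = WJW$ (the last equality holds since $W$ is a sum of vertex idempotents and $J$ is generated by elements of the $W$-corner up to multiplying by vertices). So the heart is to prove $WJW = I$ as ideals of the corner algebra.

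**The corner-ideal identification.** For the inclusion $I\subseteq WJW$: each generator of $I$ lies in $WJW$ because $\rho(e,f)\rho(f,g)\rho(g,h)-\rho(e,g)\rho(g,f)\rho(f,h) = W(e^*f f^*g g^*h - e^*g g^*f f^*h)W$ is exactly the $W$-compression of a generator of $J$ (recall $r(e)\in E^{0,1}$ so these words already live in $WL(E,C)W$), using the extension convention \eqref{eq:extendingrho} to absorb the degenerate cases $X_e=X_f$ etc. For the reverse inclusion $WJW\subseteq I$: a general element of $WJW$ has the form $\sum_i a_i g_i b_i$ with $g_i$ a generator of $J$ and $a_i,b_i\in L(E,C)$, compressed by $W$ on both sides; inserting $W=\sum_w w$ and using that every monomial of $L(E,C)$ with source and range in $E^{0,1}$ is a product of $\rho$'s (this is the normal-form argument already run in the proof of Proposition \ref{prop:LVandLW}), I would rewrite $Wa_i\,g_i\,b_iW$ as a sum of terms $\alpha\cdot(\text{generator of }I)\cdot\beta$ with $\alpha,\beta\in LW(E,C)$. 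The mechanism is that $Wa_iW$ and $Wb_iW$ are already in $LW(E,C)$, and the vertex $s(e)\in E^{0,0}$ appearing in the middle of $g_i$ can be expanded via (SCK2') $s(e)=\sum_{x\in X}\tau(x,x)$-type identities (or directly absorbed), converting $W a_i g_i b_i W$ into $LW(E,C)$-combinations of the $I$-generators.

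**Main obstacle.** I expect the delicate point to be the bookkeeping in the reverse inclusion: a generator $g$ of $J$ contains an interior factor with source in $E^{0,0}$ (a vertex $u$), so $g$ itself is \emph{not} in the $W$-corner, and one must show that sandwiching by arbitrary $L(E,C)$-elements and then compressing by $W$ never produces anything outside the $LW(E,C)$-ideal generated by the listed four-edge differences. Concretely, one has to verify that the identity $\rho(e,f)\rho(f,g)\rho(g,h)=\rho(e,g)\rho(g,f)\rho(f,h)$ holds modulo $I$ \emph{for all} choices of $e,f,g,h$ with common source including the degenerate same-block cases, and that these suffice to generate everything obtained by compressing $J$. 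Handling the convention \eqref{eq:extendingrho} uniformly — so that, e.g., when $g=f$ the relation degenerates correctly to $\rho(e,f)r(f)\rho(f,h)=\rho(e,f)\rho(f,h)$ consistently on both sides — is where I would be most careful, and I would check those boundary cases explicitly before asserting the general ideal equality.
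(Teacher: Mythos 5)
Your proposal is correct and takes essentially the same route as the paper: cite \cite[Theorem 5.1]{AE} (whose kernel generators are in fact the commutators $[ee^*,ff^*]$, which connect to your four-edge differences via the identity $\rho(e,f)\rho(f,g)\rho(g,h)-\rho(e,g)\rho(g,f)\rho(f,h)=e^*[ff^*,gg^*]h$), reduce formally to the corner identity $WJW=I$, and settle the nontrivial inclusion $WJW\subseteq I$ by exploiting the bipartite structure to produce the flanking factors $e^*$ and $h$. The paper implements that last step by spanning $WJW$ with terms $\gamma[ff^*,gg^*]\lambda$ and observing that bipartiteness forces $\gamma=\gamma'e^*$ and $\lambda=h\lambda'$ with $\gamma',\lambda'\in LW(E,C)$ --- exactly your ``directly absorbed'' mechanism, so your alternative (SCK2)-insertion of $v=\sum_{x\in X}xx^*$ is a valid but unneeded variant.
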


\begin{proof}
		Let $I$ be the $*$-ideal of $LW(E,C)$ generated by all the elements $\rho (e,f)\rho (f,g)\rho (g,h)- \rho (e,g)\rho(g,f)\rho (f,h)$, where $s(e)= s(f)= s(g)=s(h)$. 
	
	By \cite[Theorem 5.1]{AE}, the kernel of the map $\phi_0$ is the $*$-ideal $J$ of $L(E,C)$ generated by all the commutators $[ee^*,ff^*]$, for $e,f\in E^1$. Observe that 
	$$\rho (e,f)\rho (f,g)\rho (g,h)- \rho (e,g)\rho(g,f)\rho (f.h) = e^*[ff^*,gg^*]h \in J,$$ so we get 
	$I\subseteq WJW= \Ker (\phi_0|)$. For the other inclusion, observe that the family $e^*[ff^*,gg^*]h$, where $s(e)=s(f)=s(g)=s(h)$ is a family of generators for the ideal $WJW$. Indeed, we have $[ff^*,gg^*]=0$ except that $s(f)= s(g)$, so $J$ is generated as an ideal by the set of elements $[ff^*,gg^*]$, where $s(f)=s(g)$. Hence, every element of $WJW$ is a linear combination of terms of the form 
	$$\gamma [ff^*, gg^*]\lambda,$$
	where $v:= s(f)= s(g)$, $\gamma ,\lambda$ are paths in $\hat{E}$, with $r(\gamma)= v=s(\lambda)$ and $s(\gamma),r(\lambda)\in E^{0,1}$, so $\gamma = \gamma ' e^*$, $\lambda = h\lambda'$, with $e,h\in E^1$, $s(e)=s(h)=v$, and $\gamma',\lambda'\in LW(E,C)$, so that 
	$$\gamma [ff^*,gg^*]\lambda = \gamma ' (e^*[ff^*,gg^*]h)\lambda' \in  I.$$
	This concludes the proof. 
\end{proof}

We deduce from the above that $LW(E,C)/I\cong LV(E_1,C^1)$ through the $*$-homomorphism induced by $\phi_0$, where $I$ is the ideal generated by $\rho(e,f)\rho(f,g)\rho(g,h)- \rho (e,g)\rho (g,f)\rho (f,h)$, with $s(e)=s(f)= s(g)= s(h)$. With the help of Theorem \ref{thm:isovertexweighted} we can thus identify a suitable quotient of $L(E,\omega)$, for a vertex weighted graph $(E,\omega)$, which is an {\it upper} Leavitt path algebra of a separated graph. We show next that this algebra is precisely the algebra $L_1(E,\omega)$ from Definition \ref{def:algebraL1}.

Recall from Definition \ref{def:algebraL1} that $L_1(E,\omega)=L(E,\omega)/I_0$, where $I_0$ is the $*$-ideal of $L(E,\omega)$ generated by the elements:

\begin{equation}
\label{eq:sameedifferentij}
e_ie_j^*\,\, \quad  \text{for }\, e\in E^1 \,\text{ and }\,  1\le i,j\le \omega (e) \, \text{ with } \, i\ne j, 
\end{equation}

\begin{equation}
\label{eq:sameijdifferentef}	
e_i^*f_i  \,\, \quad  \text{for }\, e,f\in s^{-1}(v) \,\text{ with }\,   e\ne f, v\in E^0, \, \text{ and }\,  1\le i\le \text{min}\{\omega (e),\omega (f)\}.
\end{equation}

\begin{theorem}
	\label{thm:quotient-of-LEWM}
	Let $(E,\omega)$ be a locally finite vertex weighted graph, and let $L_1(E,\omega)=L(E,\omega)/I_0$ be the $*$-algebra from Definition \ref{def:algebraL1}. Then we have a canonical $*$-isomorphism
	$$\Phi _1 \colon L_1(E,\omega ) \longrightarrow LV(E(\omega )_1,C(\omega)^1)$$ 
	such that $\Phi_1 (v)= v_1$ and $\Phi_1 (e_i) = \tau(\alpha^{h(v,i)}(\tilde{e}), \alpha^{\tilde{e}}(h(v,i)))$ for $e\in E^1$, $1\le i \le \omega (e)$ and $v=s(e)$.  Hence $L_1(E,\omega)$ is $*$-isomorphic to the upper Leavitt path algebra of a separated graph.
\end{theorem}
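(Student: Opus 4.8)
The plan is to obtain $\Phi_1$ as the composite of the $*$-isomorphism $\Phi\colon L(E,\omega)\to LW(E(\omega),C(\omega))$ of Theorem~\ref{thm:isovertexweighted} with the canonical surjection $LW(E(\omega),C(\omega))\onto LV(E(\omega)_1,C(\omega)^1)$ induced by $\phi_0$, whose kernel is the $*$-ideal $I$ of Proposition~\ref{prop:kernel-of1step} applied to the locally finite bipartite separated graph $(E(\omega),C(\omega))$. Since $\Phi$ is an isomorphism, the entire statement reduces to the single identity
$$\Phi(I_0)=I,$$
for then $\Phi$ descends to an isomorphism $L_1(E,\omega)=L(E,\omega)/I_0\congto LW(E(\omega),C(\omega))/I\cong LV(E(\omega)_1,C(\omega)^1)$. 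Writing $v=s(e)$, I will use throughout that $\Phi(e_i)=h(v,i)^*\tilde e$ and the two identities $h(v,i)^*h(v,j)=\delta_{ij}v_1$ and $\tilde e^*\tilde f=\delta_{e,f}\,r(e)_1$, valid in $L(E(\omega),C(\omega))$ because $X_v$ and $Y_v$ are the two blocks of $C(\omega)_{v_0}$.

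For the inclusion $\Phi(I_0)\subseteq I$ it suffices to send the two families \eqref{eq:sameedifferentij} and \eqref{eq:sameijdifferentef} of generators of $I_0$ into $I=WJW$, where, as in the proof of Proposition~\ref{prop:kernel-of1step}, $J$ denotes the $*$-ideal of $L(E(\omega),C(\omega))$ generated by the commutators $[ff^*,gg^*]$. A direct computation gives, for $i\ne j$,
$$\Phi(e_ie_j^*)=h(v,i)^*\tilde e\,\tilde e^*h(v,j)=h(v,i)^*[\tilde e\tilde e^*,\,h(v,j)h(v,j)^*]h(v,j)\in WJW,$$
the middle equality using $h(v,i)^*h(v,j)=0$ and $h(v,j)^*h(v,j)=v_1$; symmetrically, for distinct $e,f\in s^{-1}(v)$,
$$\Phi(e_i^*f_i)=\tilde e^*h(v,i)h(v,i)^*\tilde f=\tilde e^*[h(v,i)h(v,i)^*,\,\tilde f\tilde f^*]\tilde f\in WJW.$$
As $I_0$ is the $*$-ideal generated by these elements, $\Phi(I_0)\subseteq I$.

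The reverse inclusion $I\subseteq\Phi(I_0)$ is the core of the proof. By the proof of Proposition~\ref{prop:kernel-of1step}, $I$ is generated by the elements $a^*[ff^*,gg^*]d$ with common source $v_0$, and the commutator vanishes unless $f,g$ lie in different blocks of $C(\omega)_{v_0}$; so I may take $f=\tilde e\in X_v$, $g=h(v,i)\in Y_v$, with $a,d$ ranging over $X_v\sqcup Y_v$. The plan is a case analysis on the positions of $a$ and $d$. In the two cases where $a,d$ lie in the same block, collapsing the inner products by the $\delta$-identities and re-expressing the surviving factors through $\Phi$ writes the generator as a difference of two $\Phi$-images of monomials, each of which either contains a factor $e_i^*f_i$ ($e\ne f$) or $e_ie_j^*$ ($i\ne j$), hence lies in $I_0$, or else the two monomials coincide and cancel. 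The step I expect to be the main obstacle is the mixed case $a=\tilde f\in X_v$, $d=h(v,k)\in Y_v$ (the case $a\in Y_v$, $d\in X_v$ following by taking adjoints), where the expansion reads
$$\tilde f^*[\tilde e\tilde e^*,\,h(v,i)h(v,i)^*]h(v,k)=\delta_{e,f}\,\delta_{ik}\,\Phi(e_i^*)-\Phi(f_i^*e_ie_k^*),$$
and the first summand is $\Phi$ of an element that does \emph{not} lie in $I_0$. The resolution is that it cancels against the diagonal part of the second: when $e=f$ and $i=k$, the partial-isometry identity $e_i^*e_ie_i^*=e_i^*$ (Remark~\ref{rem:remarksondefinitionL1Ew}) gives $\Phi(f_i^*e_ie_k^*)=\Phi(e_i^*)$, so the difference is $0$; in every other case $\delta_{e,f}\delta_{ik}=0$ while $f_i^*e_ie_k^*$ carries a factor $f_i^*e_i$ with $e\ne f$ or $e_ie_k^*$ with $i\ne k$, and so lies in $I_0$.

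Once both inclusions are established, $\Phi(I_0)=I$ and $\Phi$ induces the desired isomorphism $\Phi_1$. To verify the displayed formula I would evaluate the composite on the generators. Clearly $\Phi_1(v)=\phi_0(\Phi(v))=\phi_0(v_1)=v_1$. For $e_i$, since $|C(\omega)_{v_0}|=2$, the images $\phi_0(h(v,i))=\sum_{\tilde f\in X_v}(\alpha^{h(v,i)}(\tilde f))^*$ and $\phi_0(\tilde e)=\sum_{h(v,j)\in Y_v}(\alpha^{\tilde e}(h(v,j)))^*$ multiply out to
$$\Phi_1(e_i)=\phi_0(h(v,i)^*\tilde e)=\sum_{\tilde f\in X_v,\ h(v,j)\in Y_v}\tau\big(\alpha^{h(v,i)}(\tilde f),\,\alpha^{\tilde e}(h(v,j))\big),$$
and all cross terms vanish, since $\tau(\cdot,\cdot)$ is nonzero only when its two edges share a range, i.e. $v(\tilde f,h(v,i))=v(\tilde e,h(v,j))$, forcing $\tilde f=\tilde e$ and $j=i$. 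Only the term $\tilde f=\tilde e$, $j=i$ survives, giving $\Phi_1(e_i)=\tau(\alpha^{h(v,i)}(\tilde e),\alpha^{\tilde e}(h(v,i)))$, as claimed.
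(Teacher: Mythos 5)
Your proposal is correct and takes essentially the same route as the paper: compose the isomorphism $\Phi$ of Theorem \ref{thm:isovertexweighted} with $\phi_0|$, identify the kernel via Proposition \ref{prop:kernel-of1step}, prove $\Phi(I_0)=I$ by the same block-by-block case analysis (your compressed-commutator computation for $\Phi(I_0)\subseteq I$ just makes explicit what the paper dismisses as following ``easily''), and collapse the sums in $\phi_0(h(v,i))^*\phi_0(\tilde e)$ by range-matching to obtain the generator formula. One wording slip in the mixed case: since $e_i^*e_ie_i^*=e_i^*$ holds only in $L_1(E,\omega)$, i.e.\ modulo $I_0$, the difference $\delta_{e,f}\delta_{ik}\Phi(e_i^*)-\Phi(f_i^*e_ie_k^*)$ is \emph{not} zero in $LW(E(\omega),C(\omega))$ when $e=f$ and $i=k$; rather it equals $\Phi(e_i^*-e_i^*e_ie_i^*)$, which lies in $\Phi(I_0)$ by Remark \ref{rem:remarksondefinitionL1Ew}(2) --- exactly the conclusion you need, and exactly how the paper phrases its case (3).
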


\begin{proof}
	By Theorem \ref{thm:isovertexweighted}, we have a $*$-isomorphism
	$$\Phi \colon L(E,\omega )\to LW(E(\omega),C(\omega)).$$
	Composing with the surjective $*$-homomorphism $\phi_0| \colon LW(E(\omega ),C(\omega))\to LV(E(\omega)_1,C(\omega)^1)$ we obtain a surjective $*$-homomorphism
	$$\widetilde{\Phi}_1 \colon L(E,\omega) \to   LV(E(\omega)_1,C(\omega)^1).$$
	By Proposition \ref{prop:kernel-of1step}, we only have to check that $\Phi (I_0)= I$, where $I$ is the $*$-ideal of 
	$L(E(\omega),C(\omega))$ generated by all the elements $\gamma (a,b,c,d):=\rho (a,b)\rho(b,c)\rho (c,d)- \rho (a,c)\rho (c,b)\rho (b,d)$, for $a,b,c,d\in s^{-1}(v_0)$, $v\in E^0$. Given such an element, there are various possibilities to consider, depending on which elements $a,b,c,d$ belong to $X_v$ or to $Y_v$. Observe that $\gamma (a,b,c,d)= 0$ if $b$ and $c$ belong to the same set, so by symmetry we only need to consider the case where $b\in X_v$ and $c\in Y_v$. Assuming this, write $b=\tilde{f}$ and $c=h(v,i)$ for $f\in s^{-1}(v)$ and $1\le i\le \omega (v)$. We have four cases to consider:
	\begin{enumerate}
		\item $a= \tilde{e}\in X_v$ and $d=\tilde{g}\in X_v$. Then we have
		\begin{align*}
	\Phi^{-1}(\gamma (a,b,c,d) ) & = \Phi^{-1}( \tilde{e}^* \tilde{f} \tilde{f}^* h(v,i)h(v,i)^* \tilde{g} - \tilde{e}^* h(v,i)h(v,i)^* \tilde{f}\tilde{f}^* \tilde{g})\\
	& = \delta _{e,f} f_i^*g_i - \delta_{f,g} e_i^*f_i.
	\end{align*}
	This element in nonzero only if $e=f$ and $f\ne g$, or $e\ne f$ and $f=g$. In both of these cases we get an element of the form \eqref{eq:sameijdifferentef}. 
	\item Similarly, if $a,d\in Y_v$, then we get that $\Phi^{-1}(\gamma (a,b,c,d))$ is an element of the form \eqref{eq:sameedifferentij}. 
	\item $a=h(v,j)\in Y_v$ and $d=\tilde{e}\in X_v$. In this case, we have
	$$\Phi^{-1}(\gamma (a,b,c,d))= f_jf_i^*e_i - \delta_{i,j} \delta_{e,f} f_i.$$
	For $i=j$ and $e=f$ this gives the element $e_i e_i^*e_i-e_i$, which belongs to $I_0$ by Remark \ref{rem:remarksondefinitionL1Ew}(2). When $i\ne j$ or $e\ne f$ this gives an element which also belongs to $I_0$. 
	\item If $a\in X_v$ and $d\in Y_v$, then $\gamma (a,b,c,d)= - \gamma (d,b,c,a)^*$, and we reduce to case (3), 
\end{enumerate}

	 We thus obtain that $\Phi^{-1}(I)\subseteq I_0$, that is, $I\subseteq \Phi (I_0)$. The reverse inclusion $\Phi (I)\subseteq I_0$ follows easily from the above computations.  
	
	Hence we obtain a $*$-isomorphism $\Phi_1 \colon L_1(E,\omega)\to LV(E(\omega)_1,C(\omega)^1)$, with
	$\Phi _1 (v) = v_1$ and 
	\begin{align*}
	\Phi_1 (e_i) & = \phi _0 (h(v,i))^*\phi_0 (\tilde{e})\\
	& = \Big( \sum_{f\in s^{-1}(v)} \alpha^{h(v,i)}(\tilde{f})\Big)\Big( \sum_{j=1}^{\omega(v)} \alpha^{\tilde{e}}(h(v,j))^*\Big) \\
	&  =   \alpha^{h(v,i)}(\tilde{e})\alpha^{\tilde{e}}(h(v,i))^*,
	\end{align*}
	where the last equality follows from $r(\alpha^{h(v,i)}(\tilde{f})) = v(\tilde{f},i)$ and $r(\alpha^{\tilde{e}}(h(v,j))) = v(\tilde{e},j)$, which imply that the only nonzero summand is the one corresponding to $f=e$ and $j=i$. 
	
	This concludes the proof of the theorem. 
\end{proof}

It is time now to show that Theorem \ref{thm:quotient-of-LEWM} generalizes to arbitrary weighted graphs. The point is that in $L(E,\omega^M)/I_0$ we can kill the projections $e_je_j^*$ for $i+1\le j\le \omega(v)$, and this is equivalent to killing the projections $v(\tilde{e},h(v,j))$ in $L(E(\omega^M)_1,C(\omega^M)^1)$. 

Before proceeding to the statement of the next theorem, we will introduce a useful simplified notation for the generators of $L(E(\omega)_1,C(\omega)^1)$. This notation will be in use in the rest of the paper, whenever there is no danger of confusion. 

\begin{notation}
	\label{notation:LEw1Cw1}
	Let $(E,\omega)$ be a locally finite vertex weightewd graph, and let $R=L(E(\omega)_1,C(\omega)^1)$ be the Leavitt path algebra of the separated graph $(E(\omega)_1,C(\omega)^1)$. The elements of $E_1^{0,0}$ will be simply denoted by $v$, where $v\in E^0$. (These elements are denoted $v_1$ in the previous, general  notation.) The elements of $E^{0,1}_1$ will be denoted by $v(e,i)$, where $e\in E^1$ and $1\le i\le \omega (e)$. (These elements are denoted $v(\tilde{e}, h(v,i))$ in the previous, general notation.)
	The elements in $X(e)$ for $e\in E^1$ will be denoted by $\alpha^e(i)$, where $e\in E^1$ and $1\le i \le \omega (e)$. (These elements are denoted $\alpha^{\tilde{e}}(h(v,i))$ in the previous, general notation.)
	The elements in $X(v,i):= X(h(v,i))$ are denoted by $\alpha ^i(e)$, where $e\in E^1$ and $1\le i\le \omega (e)$. (These elements are denoted $\alpha^{h(v,i)}(\tilde{e})$ in the previous, general notation.)
	
	With the new notation, we have, by Theorem \ref{thm:quotient-of-LEWM}, 
	$$\Phi _1 (e_i) = \alpha^i(e)\alpha^{e}(i)^*= \tau (\alpha^i(e),\alpha^e(i))$$
	for $e\in E^1$ and $1\le i\le \omega (v)$.
\end{notation}

We can now extend the definition of the separated graph $(E(\omega)_1,C(\omega)^1)$ to any locally finite weighted graph.

\begin{definition}
	\label{def:sepgraphofarbitraryweightedgraph}
Let $(E,\omega )$ be a locally finite weighted graph. We define $(E(\omega )_1,C(\omega)^1)$ as the bipartite locally finite separated graph with 
$$E(\omega)_1^{0,0}= E^0,\qquad E(\omega )_1^{0,1}= \{v(e,i) : e\in E^1, 1\le i \le \omega (e) \},$$ and for each $v\in E^0$:
$$C(\omega )^1_v= \{X(v,1),\dots , X(v,\omega (v))\} \bigcup \{X(e)\mid e\in E^1, r(e)= v \},$$ where, for $1\le i\le \omega (v)$,
$$X(v,i)= \{\alpha^i (e) : e\in s^{-1}(v), \omega (e)\ge i \}, \quad s(\alpha^i(e)) = s(e)=v, \quad r(\alpha^i(e))= v(e,i),$$
and, for $e\in E^1$ with $r(e)=v$,
$$X(e) = \{ \alpha^e(i) : 1\le i\le \omega (e) \}, \quad s(\alpha^e(i)) = r(e)=v, \quad r(\alpha^e(i))= v(e,i).$$
\end{definition}

We now recall the definition of the quotient graph $(E/H,C/H)$, see \cite[Construction 6.8]{AG2}. We will use the pre-order $\le $ on $E^0$ given by $v\le w$ if and only if there is a path $\gamma $ in $E$ such that $s(\gamma)= w$ and $r(\gamma)= v$.  

\begin{definition}
	\label{def:hereditaryCsaturated}
	Let $(E,C)$ be a finitely separated graph. A subset $H$ of $E^0$ is {\it hereditary} if $v\le w$ and $w\in H$ imply that $v\in H$, and $H$ is $C$-saturated if whenever we have $v\in E^0$ such that $r(x)\in H$ for all $x\in X$, for some $X\in C_v$, then necessarily $v\in H$. Given a hereditary $C$-saturated subset $H$ of $E^0$, the quotient separated graph $(E/H,C/H)$ has $(E/H)^0 = E^0\setminus H$, and $(C/H)_v= \{X/C\mid X\in C_v\}$, where for $v\in E^0\setminus H$ and $X\in C_v$, $X/C:=\{ x\in X\mid r(x)\notin H \}\ne  \emptyset$. We denote by $\mathcal H(E,C)$ the lattice of hereditary $C$-saturated subsets of $E^0$. 
\end{definition}

\begin{theorem}
	\label{thm:IsoarbitraryWeightedGraph}
	Let $(E,\omega )$ be a locally finite weighted graph, let $L_1(E,\omega )$ be the $L_1$-algebra of $(E,\omega)$ (Definition \ref{def:algebraL1}), and let $(E(\omega )_1,C(\omega)^1)$ be the bipartite separated graph from Definition \ref{def:sepgraphofarbitraryweightedgraph}. Then there exists a canonical $*$-isomorphism 
	$$\Phi_1 \colon L_1(E,\omega) \overset{\cong}{\longrightarrow} LV(E(\omega)_1,C(\omega)^1)$$
	such that $\Phi_1 (v)= v$  for $v\in E^0$ and $\Phi_1 (e_i) = \tau(\alpha^{i}(e), \alpha^{e}(i))$ for $e\in E^1$ and $1\le i \le \omega (e)$.
	In particular $L_1(E,\omega )$ is isomorphic to the upper Leavitt path algebra of a separated graph. 
\end{theorem}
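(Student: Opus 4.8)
The plan is to reduce the general locally finite weighted graph to the vertex weighted case already handled by Theorem~\ref{thm:quotient-of-LEWM}, exactly following the roadmap laid out in the paragraph preceding Notation~\ref{notation:LEw1Cw1}. Given $(E,\omega)$, I would let $(E,\omega^M)$ be the associated vertex weighted graph with $\omega^M(v)=\omega(v)$ for all regular $v$, so that $L(E,\omega)$ is the quotient of $L(E,\omega^M)$ by the ideal generated by the superfluous generators $e_j$ with $\omega(e)<j\le\omega(v)$. Passing to the $L_1$-algebras, $L_1(E,\omega)=L(E,\omega)/I_0$ is correspondingly the quotient of $L_1(E,\omega^M)$ by the image of that ideal. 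The key observation to make precise is that killing the partial isometries $e_j$ (equivalently, the projections $e_je_j^*$) for $\omega(e)<j\le\omega(v)$ in $L_1(E,\omega^M)$ corresponds, under the isomorphism $\Phi_1$ of Theorem~\ref{thm:quotient-of-LEWM}, to killing the vertices $v(\tilde e,h(v,j))=v(e,j)$ in $LV(E(\omega^M)_1,C(\omega^M)^1)$.

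First I would verify, using Notation~\ref{notation:LEw1Cw1} and the formula $\Phi_1(e_j)=\tau(\alpha^j(e),\alpha^e(j))=\alpha^j(e)\alpha^e(j)^*$, that the generator $e_j$ maps to an element supported at the vertex $v(e,j)\in E(\omega^M)_1^{0,1}$; concretely $\alpha^e(j)^*\alpha^e(j)=v(e,j)$ and hence $\Phi_1(e_j^*e_j)=v(e,j)$. Thus the ideal generated by the $e_j$ for $\omega(e)<j\le\omega(v)$ maps to the ideal generated by the vertices $\{v(e,j):\omega(e)<j\le\omega(v)\}$. The next step is to identify this set of vertices as the hereditary $C$-saturated subset $H$ of $E(\omega^M)_1^0$ that, upon quotienting via Definition~\ref{def:hereditaryCsaturated}, produces exactly the separated graph $(E(\omega)_1,C(\omega)^1)$ of Definition~\ref{def:sepgraphofarbitraryweightedgraph}. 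I would check hereditariness (these vertices are sinks in the bipartite graph, so it is immediate that nothing below them lies outside $H$) and $C$-saturation, and then confirm that the quotient $C^1/H$ deletes precisely the arrows $\alpha^e(j)$ and $\alpha^j(e)$ with $\omega(e)<j$, matching the truncated sets $X(v,i)=\{\alpha^i(e):\omega(e)\ge i\}$ and $X(e)=\{\alpha^e(i):1\le i\le\omega(e)\}$ in Definition~\ref{def:sepgraphofarbitraryweightedgraph}.

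With $H$ in hand, I would invoke the standard fact (from \cite{AG2}, and its upper-Leavitt analogue) that quotienting $LV(E(\omega^M)_1,C(\omega^M)^1)$ by the ideal generated by the vertices in a hereditary $C$-saturated set $H$ yields $LV((E(\omega^M)_1)/H,(C(\omega^M)^1)/H)=LV(E(\omega)_1,C(\omega)^1)$. Combining the isomorphism $L_1(E,\omega)\cong L_1(E,\omega^M)/(\text{ideal of the }e_j)$ with $\Phi_1$ on the vertex weighted side and this quotient identification then yields the desired $*$-isomorphism, and tracking the images of the generators through the two quotients gives the stated formulas $\Phi_1(v)=v$ and $\Phi_1(e_i)=\tau(\alpha^i(e),\alpha^e(i))$.

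The main obstacle I anticipate is the bookkeeping needed to show that the two quotient operations commute correctly, i.e.\ that the ideal of $L_1(E,\omega^M)$ generated by the extra $e_j$ really corresponds under $\Phi_1$ to the \emph{vertex} ideal generated by $H$, rather than merely being contained in it. This requires checking that no additional relations are imposed: one must argue that modding out the vertices $v(e,j)$ in the upper Leavitt path algebra does not collapse any of the surviving generators $\tau(\alpha^i(e),\alpha^e(i))$ with $i\le\omega(e)$, which is where the explicit basis/normal-form description (Theorem~\ref{basegrafosseparados}) together with the precise combinatorics of the $1$-step resolution must be used to keep the two presentations in exact correspondence.
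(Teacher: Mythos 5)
Your strategy coincides step for step with the paper's proof: reduce to the vertex weighted graph $(E,\omega^M)$ via Theorem \ref{thm:quotient-of-LEWM}, introduce $H=\{v(e,j)\mid e\in E^1,\ \omega(e)+1\le j\le \omega(s(e))\}$, verify that $H$ is hereditary (the lower-level vertices are sinks) and $C(\omega^M)^1$-saturated, identify $(E(\omega)_1,C(\omega)^1)$ with the quotient graph, and transport the ideal of the superfluous $e_j$ across $\Phi_1$. One computational slip: $\Phi_1(e_j^*e_j)=\tau(\alpha^e(j),\alpha^e(j))=\alpha^e(j)\alpha^e(j)^*$, not the vertex $v(e,j)=\alpha^e(j)^*\alpha^e(j)$, which does not even lie in the corner $LV(E(\omega^M)_1,C(\omega^M)^1)$; the images of the $e_j$ generate an ideal $\mathfrak I$ of the corner, and the crux --- which you correctly single out as the main obstacle --- is the equality $\mathfrak I=IV(H):=I(H)\cap LV(E(\omega^M)_1,C(\omega^M)^1)$, where only $\mathfrak I\subseteq IV(H)$ is clear. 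At this one point the two arguments genuinely diverge: the paper proves the reverse inclusion indirectly, using the universal property of $LV(E(\omega)_1,C(\omega)^1)$ to build $\varphi\colon LV(E(\omega)_1,C(\omega)^1)\to LV(E(\omega^M)_1,C(\omega^M)^1)/\mathfrak I$ (the only nontrivial relation check being (SCK2'), where the killed projections $\tau(\alpha^e(j),\alpha^e(j))$ and $\tau(\alpha^j(e),\alpha^j(e))$ supply the missing summands) and then observing that $\varphi\circ\pi\circ\zeta=\mathrm{Id}$ forces the quotient map $\zeta$ to be injective, whence $\mathfrak I=IV(H)$. Your proposed normal-form route, which you sketch but do not execute, also works and is arguably more direct: $I(H)$ is spanned by monomials $a\,v(e,j)\,b$ with $a,b$ paths in the extended graph, and since $v(e,j)$ is a sink receiving exactly the two edges $\alpha^j(e)$ and $\alpha^e(j)$, compressing by $V$ factors any such monomial as $a'\tau(x,y)b'$ with $x,y\in\{\alpha^j(e),\alpha^e(j)\}$ and $a',b'$ in the corner; all four elements $\tau(x,y)$ lie in $\mathfrak I$, so $IV(H)\subseteq\mathfrak I$. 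This is the same corner-factorization trick the paper itself uses in Proposition \ref{prop:kernel-of1step}, so your plan is sound; what the paper's universal-property detour buys is that one never needs a spanning description of $I(H)$, while your route, once written out, avoids constructing $\varphi$ and checking relations altogether.
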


\begin{proof}
	By Theorem \ref{thm:quotient-of-LEWM}, we have a $*$-isomorphism
	$$\Phi_1\colon L_1(E,\omega^M)\longrightarrow LV(E(\omega^M)_1,C(\omega^M)^1)),$$
	where $(E,\omega^M)$ is the unique vertex weighted graph such that $\omega^M(v)=\omega (v)$ for all $v\in E^0$. 
	
	We consider the following subset of $E(\omega^M)_1^{0,1}$: 
	$$H = \{ v(e, j))\mid e\in E^1,  \omega (e)+1\le j \le \omega(s(e))\}.$$ 
	Then $H\subset E(\omega^M)_1^{0,1}$ is a hereditary subset of $E(\omega^M)_1^{0}$, because the vertices in $E(\omega^M)_1^{0,1}$ are sinks in $E(\omega^M)_1$. Let us check that $H$ is also $C(\omega^M)^1$-saturated. The sets in $C(\omega^M)^1$ are of one of the forms $X(v,i)$, for $1\le i\le \omega(v)$, or $X(e)$, for $e\in E^1$. It is enough to show that $r(X)\nsubseteq H$ for $X$ of these forms. We consider first the case $X=X(v,i)$, for $1\le i \le \omega (v)$. The elements of $X(v,i)$ are of the form $\alpha^{i}(e)$, where $e\in s^{-1}(v)$. Since $\omega (v)= \text{max}\{ \omega (e)\mid e\in s^{-1}(v)\}$, there is some $e\in s^{-1}(v)$ such that $\omega(e)=\omega (v)$, and then
	$$r(\alpha ^{i}(e))= v(e, i) \notin H.$$
	Now consider $e\in E^1$ and set $v= s(e)$. Since $\omega (e)\ge 1$, we have that 
	$r(\alpha^{e}(1))= v(e, 1)\notin H$.
	
	Hence we obtain that $H$ is a hereditary $C(\omega^M)^1$-saturated subset of $E(\omega^M)_1^0$.  The ideal $I(H)$ of $L(E(\omega^M)_1,C(\omega^M)^1)$ generated by $H$ satisfies that 
	$$L(E(\omega^M)_1,C(\omega^M)^1)/I(H)\cong L(E(\omega^M)_1/H,C(\omega^M)^1/H)$$
	(see the proof of \cite[Theorem 5.5]{AL}).

	We define $(E(\omega)_1,C(\omega)^1):= (E(\omega^M)_1/H,C(\omega^M)^1/H)$. 
	
	Observe that the $*$-isomorphism $L(E(\omega^M)_1,C(\omega^M)^1)/I(H) \to L(E(\omega)_1,C(\omega)^1)$  induces a $*$-isomorphism 
	$$\pi \colon LV(E(\omega^M)_1,C(\omega^M)^1)/IV(H) \longrightarrow LV(E(\omega)_1,C(\omega)^1),$$
	where $IV(H) = I(H)\cap LV(E(\omega^M)_1,C(\omega^M)^1)$. Let $\mathfrak I$ be the $*$-ideal of $LV(E(\omega^M)_1,C(\omega^M)_1)$ generated by the elements $\tau (\alpha^j(e),\alpha^e(j))$, for $\omega(e)+1\le j\le \omega(s(e))$. We claim that $\mathfrak I = IV(H)$. Clearly $\mathfrak I \subseteq IV(H)$. To show equality  observe first that $\mathfrak I$ contains the elements
	$$\tau (\alpha^j(e),\alpha^j(e))= \tau (\alpha^j(e),\alpha^e(j)) \tau (\alpha^j(e),\alpha ^e(j))^* $$
	and
	$$\tau (\alpha^e(j),\alpha^e(j)) = \tau (\alpha^j(e),\alpha^e(j))^* \tau (\alpha^j(e),\alpha ^e(j))$$ 
	for $\omega (v)+1\le j\le \omega (s(e))$. 
	We define a $*$-homomorphism $\varphi \colon LV(E(\omega)_1,C(\omega)^1)\to LV(E(\omega^M)_1,C(\omega^M)^1)/\mathfrak I$, by $\varphi (p_v) = \ol{p_v}$ for $v\in E^0$, and $\varphi (\tau(x,y))= \ol{\tau (x,y)}$, for $x,y\in (E(\omega)_1)^1$ with $r(x)= r(y)$. Here we indicate by $\ol{z}$ the class of an element $z$ in the quotient $LV(E(\omega^M)_1,C(\omega^M)^1)/\mathfrak I$. To see that $\varphi$ is well defined, we have to check that the relations in Definition \ref{def:LV(E;C)} are preserved by $\varphi$. This is obvious for all relations except for (SCK2'). Let us check relation (SCK2') for $Y= X(e)$, where $e\in E^1$. Since we are working in the graph $(E(\omega)_1,C(\omega)^1)$, this relation reads
	$$p_v= \sum_{1\le i\le \omega (e)} \tau (\alpha^e(i),\alpha^e(i)).$$
	Now, using that $\ol{\tau(\alpha^e(j),\alpha^e(j))} =0$ for $\omega(e)+1\le j \le \omega(s(e))$, we have
	$$\varphi \Big( \sum_{i=1}^{\omega(e)} \tau (\alpha^e(i),\alpha^e(i))\Big) = \sum_{i=1}^{\omega(s(e))} \ol{\tau (\alpha^e(i),\alpha^e(i)))} = \ol{p_v} = \varphi (p_v).$$
	Similarly, we can check that (SCK2')  is preserved for $Y=X(v,i)$ for $1\le i\le \omega(v)$. 
	Let 
	$$\zeta\colon LV(E(\omega^M)_1,C(\omega^M)^1)/\mathfrak I \longrightarrow LV(E(\omega^M)_1,C(\omega^M)^1)/IV(H)$$
	be the canonical quotient map. We clearly have the equality $\varphi \circ \pi \circ \zeta = \text{Id}_{LV(E(\omega^M)_1,C(\omega^M)^1)/\mathfrak I}$, hence $\zeta$ is injective, which implies that $\mathfrak I= IV(H)$, as desired. 
	
	Now the generators $\tau (\alpha^j(e),\alpha^e(j))$, for $\omega(e)+1\le j\le \omega(s(e))$, of the $*$-ideal $\mathfrak I = IV(H)$ correspond through the $*$-isomorphism $\Phi_1^{-1}$ to the elements $e_j$ in $L_1(E,\omega^M)$, and it is clear that
	$L_1(E,\omega)\cong L_1(E,\omega^M)/K$, where $K$ is the $*$-ideal of $L_1(\omega^M)$ generated by $e_j$, with $\omega(e)+1\le j\le \omega(s(e))$. We thus obtain a $*$-isomorphism, also denoted by $\Phi_1$, from $L_1(E,\omega)$ onto $LV(E(\omega)_1,C(\omega)^1)$, as desired.     
\end{proof}

\section{The algebra $\Lab (E,\omega)$}
\label{sect:AbelianizedLPA}

In this section we introduce the $*$-algebra $\Lab (E,\omega)$ for a locally finite weighted graph $(E,\omega)$, and we show it can be written as a full corner of a direct limit of a sequence of Leavitt path algebras of separated graphs. 

We begin with some preliminary definitions, see \cite[Definition 12.9]{ExelBook}.

\begin{definition}
	\label{def:tameset}
	A set $F$ of partial isometries of a $*$-algebra $A$ is said to be {\it tame} if for any two elements $u,u'\in U$, where $U$ is the multiplicative $*$-subsemigroup of $A$ generated by $F$, we have that $e(u)$ and $e(u')$ are commuting elements of $A$, where $e(v)= vv^*$ for $v\in U$. Note that if $F$ is a tame set of partial isometries, then all elements of $U$ are indeed partial isometries, and therefore the elements $e(u)$, with $u\in U$, are mutually commuting projections in $A$.   
	
	A $*$-algebra $A$ is said to be {\it tame} if it is generated as $*$-algebra  by a tame set of partial isometries. If $A$ is a $*$-algebra generated by a subset $F$ of partial isometries, there is a universal tame $*$-algebra $A^{\text{ab}}$ associated to $F$. By definition, there is a surjective $*$-homomorphism 
	$\pi \colon A \to A^{\text{ab}}$ so that $\pi (F)$ is a tame set of partial isometries generating $A^{\text{ab}}$, and such that for any $*$-homomorphism 
	$\psi \colon A \to B$ of $A$ to a $*$-algebra $B$ such that $\psi(F)$ is a tame set of partial isometries in $B$, there is a unique $*$-homomorphism
	$\ol{\psi}\colon A^{\text{ab}} \to B$ such that $\psi = \ol{\psi}\circ \pi$. 
	Indeed, we have 
	$A^{\text{ab}} = A/J$, where $J$ is the ideal of $A$ generated by all the commutators $[e(u),e(u')]$, where $u,u'\in U$.
	
	If $(E,C)$ is a separated graph, then the universal tame $*$-algebra associated to the generating set $E^0 \sqcup E^1$ of partial isometries of $L(E,C)$ is denoted by $\Lab (E,C)$. 
	
	Similarly, if $(E,C)$ is a row-finite bipartite separated graph, the  universal tame $*$-algebra associated to the generating set $E^{0,0} \sqcup \{\tau (x,y): x,y\in E^1, r(x)= r(y)\}$ of partial isometries of $LV(E,C)$ is denoted by $\LVab (E,C)$. 
\end{definition}
	
	We can now define the abelianized Leavitt path algebra of a weighted graph. Recall from Remark \ref{rem:remarksondefinitionL1Ew}(2) that the elements $e_i$, for $e\in E^1$ and $1\le i\le \omega(e)$ are partial isometries in $L_1(E,\omega)$. 
	
	\begin{definition}
		\label{def:abelLPAweighted}
		Let $(E,\omega)$ be a weighted graph and $K$ a field. The {\it abelianized Leavitt path algebra} of $(E,\omega)$, denoted by $\Lab (E,\omega)$ is the universal tame $*$-algebra associated to the 
		generating set $E^0\cup \{e_i: e\in E^1,1\le i\le \omega(e)\}$ of partial isometries of $L_1(E,\omega)$. Let $U$ be the $*$-subsemigroup of $L_1(E,\omega)$ generated by $\{e_i: e\in E^1,1\le i\le \omega(e)\}$, and let $J$ be the ideal of $L_1(E,\omega)$ generated by all the commutators $[e(u),e(u')]$, where $u,u'\in U$. Then $\Lab (E,\omega)= L_1(E,C)/J$. (Note that vertices commute with all elements $e(u)$.) 
	\end{definition}

Recall from Definition \ref{definition-F.infty.and.others} the canonical sequence $\{(E_n,C^n)\}_{n\ge 0}$ of locally finite bipartite separated graphs associated to a locally finite bipartite separated graph $(E,C)$. We denote by $\pi_n\colon L(E,C)\to L(E_n,C^n)$ and $\pi_{\infty}\colon L(E,C)\to \Lab (E,C)$ the canonical surjective $*$-homomorphisms. Note that $\pi_{2n}(V) = V_{2n}$, where $V_{2n}= \sum_{v\in E_{2n}^{0,0}} v\in M(L(E_{2n},C^{2n}))$.  

For a locally finite weighted graph $(E,\omega)$, we want to relate $\Lab (E,\omega)$ with the algebra $\Lab (E(\omega)_1,C(\omega)^1)$, where $(E(\omega)_1,C(\omega)^1)$ is the bipartite separated graph introduced in Section \ref{sect:algebraL1}. For this we need the following general lemma.

\begin{lemma}
	\label{lem:charaxcterizingLVab}
	Let $(E,C)$ be a locally finite bipartite separated graph, and set $V= \sum_{v\in E^{0,0}} v\in M(L(E,C))$. Let $\{(E_n, C^n)\}_{n\ge 0}$ be the canonical sequence of bipartite separated graphs associated to $(E,C)$. Then we have
$$\LVab (E,C)	\cong \pi_{\infty} (V)\Lab (E,C)\pi_{\infty} (V) \cong \varinjlim \pi_{2n} (V)L(E_{2n},C^{2n})\pi_{2n}(V).$$
\end{lemma}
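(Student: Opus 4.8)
The plan is to prove the two isomorphisms separately, as they rest on different inputs. Write $J\idealin L(E,C)$ for the ideal with $\Lab(E,C)=L(E,C)/J$, let $U$ be the $*$-semigroup generated by $E^0\sqcup E^1$, and let $U_V$ be the $*$-semigroup generated by the tame generators $E^{0,0}\sqcup\{\tau(x,y)\}$ of $LV(E,C)\cong VL(E,C)V$ (Proposition \ref{prop:LVandLW}). Let $J_V\idealin LV(E,C)$ be the ideal generated by the commutators $[e(u),e(u')]$ with $u,u'\in U_V$, so that $\LVab(E,C)=LV(E,C)/J_V$ by Definition \ref{def:tameset}. For the first isomorphism the point is that $\pi_{\infty}(V)\Lab(E,C)\pi_{\infty}(V)=\pi_{\infty}(VL(E,C)V)=LV(E,C)/(LV(E,C)\cap J)$, so everything reduces to the identity $LV(E,C)\cap J=VJV=J_V$. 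The inclusion $J_V\subseteq VJV$ is immediate, since $U_V\subseteq U$ exhibits each generator of $J_V$ as a generator of $J$ lying in $LV(E,C)$.

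The reverse inclusion $VJV\subseteq J_V$ is the heart of the matter, and I would first isolate an \emph{absorption lemma}: if $u\in U$ has $s(u)\in E^{0,0}$, then $e(u)=e(\tilde u)$ for some $\tilde u\in U_V$. This follows by writing a nonzero reduced word $u$ starting at an upper vertex as an alternating product $e_1f_1^*e_2f_2^*\cdots$; if it ends in a starred edge it already lies in $U_V$ as a product of the $\tau(e_i,f_i)$, while if it ends in an edge $e_m$ one absorbs the final $e_me_m^*=\tau(e_m,e_m)$ into the word, which changes neither the range projection nor membership in $U_V$. Given this, I would treat a generator $Va[e(u),e(u')]bV$ of $VJV$ with $z:=s(u)=s(u')$. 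When $z\in E^{0,0}$ the factors $Vaz$ and $zbV$ already lie in $LV(E,C)$ (as $zV=z$), and $[e(u),e(u')]=[e(\tilde u),e(\tilde u')]\in J_V$, so the product lies in $LV(E,C)\,J_V\,LV(E,C)\subseteq J_V$. When $z\in E^{0,1}$ I would use the standing hypothesis $r(E^1)=E^{0,1}$ to pick an edge $e_0$ with $r(e_0)=z$, so that $z=e_0^*e_0$ by relation (SCK1); inserting this idempotent turns the product into $(ge_0^*)\bigl(e_0[e(u),e(u')]e_0^*\bigr)(e_0h)$ with $ge_0^*,\,e_0h\in LV(E,C)$, and inserting $e_0^*e_0$ once more inside the commutator identifies $e_0[e(u),e(u')]e_0^*$ with $[e(e_0u),e(e_0u')]$. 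Since now $s(e_0u)=s(e_0u')=s(e_0)\in E^{0,0}$, the absorption lemma again places this in $J_V$. This gives $VJV\subseteq J_V$ and hence $\LVab(E,C)\cong\pi_{\infty}(V)\Lab(E,C)\pi_{\infty}(V)$.

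For the second isomorphism I would invoke the direct-limit description of the abelianized algebra along the resolution tower, $\Lab(E,C)\cong\varinjlim_n L(E_n,C^n)$, with connecting maps the $1$-step resolutions $\phi_0$ of \cite{AE} and $\pi_{\infty}$ the canonical map into the limit (the infinitary counterpart of Proposition \ref{prop:kernel-of1step}). Passing to the cofinal subsequence of even levels — so that the cut-downs become \emph{upper} corners — and using that cutting down by a multiplier projection commutes with direct limits, one obtains $\pi_{\infty}(V)\Lab(E,C)\pi_{\infty}(V)\cong\varinjlim_n\pi_{2n}(V)L(E_{2n},C^{2n})\pi_{2n}(V)$. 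Here even levels are the natural choice because $\phi_0$ carries $V$ to the upper projection $V_{2n}=\pi_{2n}(V)$ at even levels and to the lower projection at odd levels; the identity $\pi_{2n}(V)=V_{2n}$ is recorded just before the lemma, and by Proposition \ref{prop:LVandLW} each $V_{2n}L(E_{2n},C^{2n})V_{2n}$ is exactly $LV(E_{2n},C^{2n})$, so the limit is the expected one.

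I expect the establishment of $VJV\subseteq J_V$ — that is, the absorption lemma together with the $z\in E^{0,1}$ reduction via $z=e_0^*e_0$ — to be the main obstacle. Once the range projections of upper-based semigroup words are shown to arise from $U_V$, both the upper commutators and, after sandwiching, the lower ones fall into $J_V$, and the remaining steps are formal.
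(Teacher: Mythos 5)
Your proposal is correct and takes essentially the same route as the paper's proof: both reduce the first isomorphism to showing that the $V$-corner of the big commutator ideal equals the commutator ideal $J_V$ of $LV(E,C)$, handle upper-based commutators by absorbing a terminal $e_me_m^*=\tau(e_m,e_m)$ so that the range projections come from $U_V$, handle lower-based commutators by conjugating into the upper corner via an edge (you use an arbitrary $e_0\in r^{-1}(z)$ with $z=e_0^*e_0$, while the paper uses the initial edge $x_1$ of the word $u$ itself --- an inessential variation), and obtain the second isomorphism from $\Lab(E,C)\cong\varinjlim L(E_{2n},C^{2n})$ of \cite[Theorem 5.7]{AE} by cutting down with $\pi_{2n}(V)=V_{2n}$ along the cofinal even levels. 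No gaps; the argument matches the paper's in substance and in all key computations.
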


\begin{proof}
	By \cite[Theorem 5.7]{AE}, we have
	$$\Lab (E,C) \cong \varinjlim L(E_{2n},C^{2n})$$
	naturally, and hence
	$$\pi_{\infty} (V)\Lab (E,C) \pi_{\infty}(V)  \cong \varinjlim \pi_{2n}(V)L(E_{2n},C^{2n})\pi_{2n}(V) .$$
	
	By definition, $\LVab (E,C)$ is the universal tame $*$-algebra with respect to the set 
	$$E^{0,0} \sqcup \{\tau (x,y): x,y\in E^1, r(x)= r(y)\}$$
	of partial isometries of $LV(E,C)=VL(E,C)V$ (see Proposition \ref{prop:LVandLW}). Let $U$ be the $*$-subsemigroup of $LV(E,C)$ generated by $F:= \{\tau (x,y): x,y\in E^1, r(x)= r(y)\}$, and let $J$ be the ideal of $LV(E,C)$ generated by all the commutators $[e(u),e(u')]$, with $u,u'\in U$. Let $\pi \colon LV(E,C)\to \LVab (E,C)=LV(E,C)/J$ be the projection map.
	
	 Since $\tau (x,y)= xy^*$ in $L(E,C)$, it is clear that $\pi _{\infty} (U)$ becomes a tame set of partial isometries in $\pi_{\infty} (V)\Lab (E,C) \pi_{\infty}(V)$. Hence there is a unique surjective $*$-homomorphism 
	$$ \mu \colon \LVab (E,C) \longrightarrow  \pi_{\infty} (V)\Lab (E,C) \pi_{\infty}(V)$$ 
	such that $\pi_{\infty}| = \mu \circ \pi$. It remains to show that $\mu$ is injective. 
	
	Let $U'$ be the $*$-subsemigroup of $L(E,C)$ generated by $E^1$, and let $J'$ be the ideal generated by all the commutators $[e(u),e(u')]$ for $u,u'\in U'$. 
	Then the kernel of $\pi_{\infty}$ is the ideal $J'$, and our task is to show that $J'\cap VL(E,C)V= J$. As we observed above, $J\subseteq J' \cap VL(E,C)V = VJ'V$. For the reverse inclusion, note that $VJ'V$ is generated by the following types of elements:
	\begin{enumerate}
		\item[(a)] Elements of the form $[e(u),e(u')]$, where 
		$$u=x_1x_2^* x_3x_4^*\cdots x_{2n-1}, \qquad u' = y_1y_2^*y_3y_4^*\cdots y_{2m-1}$$
		with $x_i,y_j\in E^1$ and $s(x_1)=s(y_1)$.
		\item[(b)] Elements of the form $z[e(u),e(u')]t^*$, where 
		 	$$u=x_1^*x_2 x_3^*x_4\cdots x_{2n-1}^*x_{2n}, \qquad u' = y_1^*y_2y_3^*y_4\cdots y_{2m-1}^*y_{2m}$$
		 with $z,t,x_i,y_j\in E^1$ and $r(x_1)=r(y_1)= r(z)= r(t)$.
	\end{enumerate} 
	For elements $u,u'$ as in (a), observe that $e(u)=e(ux_{2n-1}^*)$ and 
	$e(u')= e(u'y_{2m-1}^*)$, so that the corresponding element $[e(u),e(u')]$ belongs to $J$. 
	For elements $u,u'$ as in (b), note that $x_1[e(u),e(u')]x_1^*= [e(u_2),e(u_3)]$, where 
	$$u_2=x_1u=(x_1x_1^*)(x_2x_3^*)\cdots (x_{2n-2}x_{2n-1}^*)x_{2n},$$  
	$$u_3= x_1u'= (x_1y_1^*)(y_2y_3^*)\cdots (y_{2m-2}y_{2m-1}^*)y_{2m},$$
	so that $u_2,u_3$ are as in (a), so we get that $x_1[e(u),e(u')]x_1^*\in J$. But now
	$$z[e(u),e(u')]t^* = (zx_1^*)(x_1[e(u_2),e(u_3)]x_1^*)(x_1t^*) \in J.$$
We conclude that $VJ'V\subseteq J $, as desired. This completes the proof.	
	 \end{proof}

\begin{theorem}
	\label{thm:Labfor-weightedgraph}
	Let $(E,\omega)$ be a locally finite weighted graph, and let $(E(\omega)_1,C(\omega)^1)$ be the corresponding bipartite separated graph (Definition \ref{def:sepgraphofarbitraryweightedgraph}). Then there are natural $*$-isomorphisms
	\begin{equation}
	\label{eq:isos-for-Labs-maintheorem}
	\Lab (E,\omega) \cong \LVab (E(\omega)_1,C(\omega)^1) \cong \ol{V}\Lab (E(\omega)_1,C(\omega)^1)\ol{V},
	\end{equation}
	where $\ol{V}= \pi _{\infty}(V)\in M(\Lab (E(\omega)_1,C(\omega)^1))$.
	\end{theorem}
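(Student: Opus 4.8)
The plan is to read off the right-hand isomorphism directly from Lemma~\ref{lem:charaxcterizingLVab} and to deduce the left-hand one from the $*$-isomorphism $\Phi_1$ of Theorem~\ref{thm:IsoarbitraryWeightedGraph} by tracking how it acts on the generating sets of partial isometries. First I would note that $(E(\omega)_1,C(\omega)^1)$ is a locally finite bipartite separated graph: it is bipartite by Definition~\ref{def:sepgraphofarbitraryweightedgraph}, and it is locally finite because $(E,\omega)$ is, each vertex $v(e,i)$ being the range of exactly the two edges $\alpha^i(e)$ and $\alpha^e(i)$. Hence Lemma~\ref{lem:charaxcterizingLVab} applies to it and gives
\[
\LVab(E(\omega)_1,C(\omega)^1)\;\cong\;\ol{V}\,\Lab(E(\omega)_1,C(\omega)^1)\,\ol{V},\qquad \ol{V}=\pi_\infty(V),
\]
which is the second isomorphism in \eqref{eq:isos-for-Labs-maintheorem}. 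The whole substance therefore lies in the first isomorphism $\Lab(E,\omega)\cong \LVab(E(\omega)_1,C(\omega)^1)$.

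For this, I would start from the $*$-isomorphism $\Phi_1\colon L_1(E,\omega)\to LV(E(\omega)_1,C(\omega)^1)$ of Theorem~\ref{thm:IsoarbitraryWeightedGraph}, with $\Phi_1(v)=v$ and $\Phi_1(e_i)=\tau(\alpha^i(e),\alpha^e(i))$, and show that it carries the defining ideal of $\Lab(E,\omega)$ onto that of $\LVab(E(\omega)_1,C(\omega)^1)$. Recall that $\Lab(E,\omega)=L_1(E,\omega)/J$ (Definition~\ref{def:abelLPAweighted}), where $J$ is generated by the commutators $[e(u),e(u')]$ with $u,u'$ in the $*$-semigroup $U$ generated by $\{e_i\}$, while $\LVab(E(\omega)_1,C(\omega)^1)=LV(E(\omega)_1,C(\omega)^1)/J'$, where $J'$ is generated by the analogous commutators over the $*$-semigroup $U'$ generated by $\{\tau(x,y):r(x)=r(y)\}$ (the vertices may be omitted from this generating set, since they commute with every projection $e(u)$ and hence do not affect the commutator ideal, \cf\ the parenthetical remark in Definition~\ref{def:abelLPAweighted}). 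Because $\Phi_1$ is a $*$-homomorphism we have $e(\Phi_1(u))=\Phi_1(e(u))$, so once we know $\Phi_1(U)=U'$ the generators of $J$ are carried onto the generators of $J'$; as $\Phi_1$ is an isomorphism this yields $\Phi_1(J)=J'$, and $\Phi_1$ descends to the desired $*$-isomorphism $L_1(E,\omega)/J\to LV(E(\omega)_1,C(\omega)^1)/J'$.

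The key step, and the main point where the concrete geometry of $(E(\omega)_1,C(\omega)^1)$ enters, is the identity $\Phi_1(U)=U'$. The inclusion $\Phi_1(U)\subseteq U'$ is immediate since each $\Phi_1(e_i)=\tau(\alpha^i(e),\alpha^e(i))$ is a generator of $U'$. For the reverse inclusion I would exploit that $r^{-1}(v(e,i))=\{\alpha^i(e),\alpha^e(i)\}$ has exactly two elements: every generator $\tau(x,y)$ of $U'$ satisfies $r(x)=r(y)=v(e,i)$ for some pair $(e,i)$, and is therefore one of $\tau(\alpha^i(e),\alpha^e(i))=\Phi_1(e_i)$, its adjoint $\Phi_1(e_i)^*$, or the two projections $\tau(\alpha^i(e),\alpha^i(e))=\Phi_1(e_i)\Phi_1(e_i)^*$ and $\tau(\alpha^e(i),\alpha^e(i))=\Phi_1(e_i)^*\Phi_1(e_i)$. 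All four lie in $\Phi_1(U)$, so $U'\subseteq\Phi_1(U)$ and equality holds. This ``exactly two incoming edges'' feature is precisely what makes every $\tau$-generator a word in a single $\Phi_1(e_i)$; I expect the remaining argument, namely that passing to the universal tame quotient commutes with $\Phi_1$, to be routine formal bookkeeping once $\Phi_1(U)=U'$ is in hand.
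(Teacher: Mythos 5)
Your proposal is correct and follows essentially the same route as the paper: the paper also obtains the first isomorphism by observing that $\Phi_1$ carries the canonical generating set of partial isometries of $L_1(E,\omega)$ onto that of $LV(E(\omega)_1,C(\omega)^1)$ (so the universal tame quotients correspond), and deduces the second isomorphism from Lemma \ref{lem:charaxcterizingLVab}. Your ``exactly two incoming edges'' computation simply makes explicit, via $\Phi_1(U)=U'$, the step the paper dismisses as clear.
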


\begin{proof} Let $\Phi_1\colon L_1(E,\omega)\to LV(E(\omega)_1,C(\omega)^1)$ be the $*$-isomorphism from Theorem \ref{thm:IsoarbitraryWeightedGraph}. It is clear that $\Phi_1$ sends the canonical set of generators of $L_1(E,\omega)$ to the canonical set of generators of $LV(E(\omega)_1,C(\omega)^1)$. Hence we get that $\Phi_1$ induces a $*$-isomorphism $\Phi^{\text{ab}}\colon \Lab (E,\omega)\to \LVab (E(\omega)_1,C(\omega)^1)$. The second isomorphism in \eqref{eq:isos-for-Labs-maintheorem} follows from Lemma \ref{lem:charaxcterizingLVab}.
		\end{proof}
 
We finally point out that the representation of $\Lab (E,C)$ as a partial crossed product 
$$\Lab (E,C) \cong C_K(\Omega (E,C))\rtimes \mathbb F$$
for a finite bipartite separated graph (\cite[Corollary 6.12(1)]{AE}) can be easily adapted to obtain a corresponding representation for the abelianized Leavitt path algebra $\Lab (E,\omega)$ of any {\it finite} weighted graph.
Although we think that a suitable version holds for any locally finite weighted graph, we restrict here to the finite case, because only finite bipartite separated graphs are considered in \cite{AE}.

We refer the reader to \cite{ExelBook} and \cite{AE} for the background definitions on crossed products of partial actions of groups.

We first state the universal property of the dynamical system associated to a weighted graph. This is basically an internal version of the corresponding property for the associated bipartite separated graph $(E(\omega)_1, C(\omega)^1)$.  

\begin{definition}
	\label{thm:def:universalproperty}
	Let $(E,\omega)$ be a finite weighted graph. An $(E,\omega)$-dynamical system consists of a compact Hausdorff space $\Omega  $, with a family of clopen sets $\{\Omega _v\}_{v\in E^0}$ and, for each $v\in E^0$, a familiy of clopen subsets $H_{\alpha^i(e)}$, for $e\in s^{-1}(v)$ and $1\le i \le \omega (e)$, and  $H_{\alpha^e(i)}$, for each $e\in r^{-1}(v)$ and $1\le i \le \omega (e)$, such that 
	$$\Omega _v = \bigsqcup_{e\in s^{-1}(v):\omega (e)\ge i} H_{\alpha^i(e)},\qquad (v\in E^0, 1\le i\le \omega (v)),$$
	$$\Omega _v = \bigsqcup_{i=1}^{\omega (e)} H_{\alpha^e(i)}, \qquad (v\in E^0, e\in r^{-1}(v)),$$
	together with a family of homeomorphisms $\theta _{e_i}\colon H_{\alpha^e(i)} \to H_{\alpha^i(e)}$ for each $e\in E^1$ and $1\le i \le \omega (e)$.    
	
	Given two $(E,\omega)$-dynamical systems $(\Omega, \theta)$ and $(\Omega ',\theta')$, an {\it equivariant map} is a map $f\colon \Omega \to \Omega'$ such that $f(\Omega_v)\subseteq \Omega'_v$ for each $v\in E^0$ and $f(H_{\alpha^i(e)})\subseteq H'_{\alpha^i(e)}$, $f(H_{\alpha^e(i)})\subseteq H'_{\alpha^e(i)}$ for each $e\in E^1$ and $1\le i\le \omega (e)$, and $\theta'_{e_i}(f(x))= f(\theta_{e_i}(x))$ for all $x\in H_{\alpha^e(i)}$.  
	
	An $(E,\omega)$-dynamical system $(\Omega , \theta)$ is said to be {\it universal} if for each other $(E,\omega)$-dynamical system $(\Omega',\theta')$, there exists a unique equivariant continuous map
	$f\colon \Omega'\to \Omega$. Of course, if such a universal space exists, it is unique up to a unique equivariant homeomorphism.
	\end{definition}

\begin{theorem}
	\label{thm:crossed-product}
	Let $(E,\omega)$ be a finite weighted graph. Then there exists a universal $(E,\omega)$-dynamical system $(\Omega (E,\omega),\theta)$. Moreover $\theta $ can be extended to a partial action of the free group $\mathbb F$
	on $\{e_i: e\in E^1, 1\le i\le \omega (e)\}$, and we have
	$$\Lab (E,\omega) \cong C_K(\Omega(E,\omega))\rtimes _{\theta} \mathbb F.$$
	\end{theorem}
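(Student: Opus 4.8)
The strategy is to leverage Theorem \ref{thm:Labfor-weightedgraph}, which gives a $*$-isomorphism $\Lab (E,\omega) \cong \LVab (E(\omega)_1,C(\omega)^1)$, together with the known crossed product description for bipartite separated graphs from \cite[Corollary 6.12(1)]{AE}. The key point is that the universal $(E,\omega)$-dynamical system should be realized \emph{internally} as a translation of the universal dynamical system of the associated separated graph $(E(\omega)_1,C(\omega)^1)$, restricted to the upper-level subspace corresponding to the projection $\ol V$. Thus the first step is to recall from \cite{AE} that $\Lab (E(\omega)_1,C(\omega)^1) \cong C_K(\Omega(E(\omega)_1,C(\omega)^1))\rtimes \mathbb F'$ for the relevant free group $\mathbb F'$, and to identify the spectrum $\Omega(E(\omega)_1,C(\omega)^1)$ of the commutative subalgebra generated by the projections $e(u)$ as a compact totally disconnected Hausdorff space built from the separated Bratteli diagram.

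\textbf{Construction of the universal system.} I would define $\Omega(E,\omega) := \ol V \cdot \Omega(E(\omega)_1,C(\omega)^1)$, i.e. the clopen subspace of the separated-graph spectrum cut out by $\ol V = \pi_\infty(V)$, with $\Omega_v$ the clopen piece corresponding to the vertex projection $p_v$ ($v\in E^0 = E(\omega)_1^{0,0}$), and with $H_{\alpha^i(e)}$, $H_{\alpha^e(i)}$ the clopen sets corresponding to the domain/range projections $e(\tau(\alpha^i(e),\alpha^e(i)))$ and its adjoint partner. The two partition identities in Definition \ref{thm:def:universalproperty} are then exactly the spectral reformulations of the relations (SCK2') in $LV(E(\omega)_1,C(\omega)^1)$ for the two types of sets $X(v,i)$ and $X(e)$ in $C(\omega)^1_v$, which become disjoint-union decompositions of clopen sets after abelianization (the projections commute by tameness). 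The homeomorphisms $\theta_{e_i}$ are induced by the partial isometries $e_i \in \Lab(E,\omega)$, whose images under $\Phi^{\text{ab}}$ are $\tau(\alpha^i(e),\alpha^e(i))$; a tame partial isometry acts as a partial homeomorphism on the spectrum of the commutative core, with domain $H_{\alpha^e(i)}$ and range $H_{\alpha^i(e)}$. The universal property of $(\Omega(E,\omega),\theta)$ among $(E,\omega)$-dynamical systems I would verify by transporting the universal property of $\Omega(E(\omega)_1,C(\omega)^1)$ through the identification, checking that an $(E,\omega)$-dynamical system is precisely the data needed to specify an equivariant map in the separated-graph picture.

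\textbf{Extending to a partial action and identifying the crossed product.} Next I would observe that the generators $\{e_i\}$ are indexed by a set on which the free group $\mathbb F$ acts, and that the partial homeomorphisms $\theta_{e_i}$ generate a partial action of $\mathbb F$: one extends $\theta$ from single generators to arbitrary reduced words by composing the corresponding partial homeomorphisms, with domains obtained by the usual intersection-and-preimage recipe for partial actions (\cite{ExelBook}). This is the internal analogue of the partial action of the free group on the separated-graph side. Finally, the isomorphism $\Lab(E,\omega) \cong C_K(\Omega(E,\omega))\rtimes_\theta \mathbb F$ follows by combining Theorem \ref{thm:Labfor-weightedgraph} with \cite[Corollary 6.12(1)]{AE} and Lemma \ref{lem:charaxcterizingLVab}: the corner $\ol V \Lab(E(\omega)_1,C(\omega)^1)\ol V$ corresponds, on the crossed-product side, to the reduction of the partial crossed product to the clopen subspace $\ol V\cdot\Omega$ together with the subgroup/groupoid generated by the relevant generators, which is exactly $C_K(\Omega(E,\omega))\rtimes_\theta \mathbb F$.

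\textbf{Main obstacle.} I expect the principal difficulty to lie in making the passage from the \emph{external} free group of the separated graph $(E(\omega)_1,C(\omega)^1)$ (which is free on the edge set $E(\omega)_1^1$) to the \emph{internal} free group $\mathbb F$ on the smaller generating set $\{e_i\}$, and in showing that the corner $\ol V\Lab(E(\omega)_1,C(\omega)^1)\ol V$ is itself a partial crossed product for this smaller group rather than merely a reduction of a groupoid. Concretely, each generator $e_i$ maps to a product $\tau(\alpha^i(e),\alpha^e(i)) = \alpha^i(e)\alpha^e(i)^*$ of two edges, so the reduction of the free partial action on edges to the corner determined by $\ol V$ must be shown to coincide with a genuine free-group partial action on the $e_i$; this is the step where one must carefully track domains and verify that no relations beyond those of a free group are imposed, exactly as in the semigroup bookkeeping of Lemma \ref{lem:charaxcterizingLVab}.
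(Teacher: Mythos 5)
Your proposal is correct and follows essentially the same route as the paper: the paper likewise defines $\Omega(E,\omega)=\bigsqcup_{v\in E^0}\Omega_v$ as the upper-level clopen part of the universal $(E(\omega)_1,C(\omega)^1)$-dynamical system of \cite[Corollary 6.11]{AE}, obtains the homeomorphisms as $\theta_{e_i}=\theta_{\alpha^i(e)}\circ\theta_{\alpha^e(i)}^{-1}$ exactly as you do via the partial isometries $\tau(\alpha^i(e),\alpha^e(i))$, and deduces the crossed-product isomorphism by combining this with Theorem \ref{thm:Labfor-weightedgraph} and \cite[Corollary 6.12(1)]{AE}. The passage from the free group on the edges of $E(\omega)_1$ to the free group on $\{e_i\}$, which you flag as the main obstacle, is left implicit in the paper's proof as well, so your write-up is, if anything, slightly more explicit on that point.
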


\begin{proof}
Let $(\Omega(E(\omega)_1,C(\omega)^1),\theta )$ be the universal $(E(\omega)_1,C(\omega)^1)$-dynamical system from \cite[Corollary 6.11]{AE}. (See \cite[Definition 6.10]{AE} for the definition of an $(E,C)$-universal dynamical system.)

Writing $\Omega:= \Omega(E(\omega)_1,C(\omega)^1)$, we have homeomorphisms 
$$\theta _{\alpha^i(e)}\colon \Omega_{v(e,i)}\to H_{\alpha^i(e)}, \quad \theta_{\alpha^e(i)}\colon \Omega_{v(e,i)} \to H_{\alpha^e(i)}$$
for each $e\in E^1$ and $1\le i \le \omega (e)$. 
It is easy to check that $\theta_{e_i}:= \theta_{\alpha^i(e)}\circ \theta_{\alpha^e(i)}^{-1}\colon H_{\alpha^e(i)}\to H_{\alpha^i(e)}$ provide the homeomorphisms making $\Omega (E,\omega): \bigsqcup_{v\in E^0} \Omega_v$ the universal $(E,\omega)$-dynamical system. 

The $*$-isomorphism $\Lab (E,\omega) \cong C_K(\Omega(E,\omega))\rtimes _{\theta} \mathbb F$ follows from the above observation, Theorem 4.4 and \cite[Theorem 6.12(1)]{AE}. 
	\end{proof}

\section{The $\mon$-monoid and structure of ideals}
\label{sect:ideals}

In this section, we study the $\mon$-monoid and the structure of ideals of the algebras $L_1(E,\omega)$ and $\Lab (E,\omega)$ introduced above. The results 
follow immediately from known results in \cite{AG2}, \cite{AE} and \cite{AL} and our previous work in the paper. We point out that the $\mon$-monoid of $L(E,\omega)$, where $(E,\omega)$ is an arbitrary weighted graph, has been determined by Preusser in \cite{PrIsr}. 

We refer the reader to \cite[Definition 3.2.1]{AAS} for the definition of the $\mon$-monoid $\mon (R)$ of a ring $R$. We will use the idempotent picture of $\mon (R)$, in which an element of $\mon (R)$ is given by the Murray-von Neumann equivalence class $[e]$ of an idempotent matrix $e$ over $R$.  

For a commutative monoid $M$, we will denote by $\mathcal L (M)$ its lattice of order-ideals, and for a (non-necessarily unital) ring $R$, we will denote by $\Tr (R)$ its lattice of trace ideals. See \cite[Section 10]{AG2} for these notions. It is shown in \cite[Theorem 10.10]{AG2} that for any ring $R$ there is a lattice isomorphism $\mathcal L (\mon (R))\cong \Tr (R)$. (The unital case of this result is due to Facchini and Halter-Koch \cite[Theorem 2.1(c)]{FH}.) We will denote by $\Idem (R)$ the lattice of idempotent-generated ideals of a ring $R$.

Let $R$ be a ring with local units (\cite[Definition 1.2.10]{AAS}), and let $e$ be an idempotent in the multiplier ring $M(R)$ of $R$. We say that $eRe$ is a (generalized) corner ring of $R$. The ring $eRe$ is a {\it full corner} of $R$ if $ReR=R$, that is, for each element $x\in R$ there are elements $r_i,s_i\in R$, $i=1,\dots , n$ such that $x=\sum_{i=1}^n r_ies_i$.

\begin{proposition}
	\label{prop:Moritaequivalence}
	Let $(E,C)$ be a row-finite bipartite separated graph such that $s(E^1) =
E^{0,0}$ and $r(E^1)= E^{0,1}$. Then both $LV(E,C)$ and $LW(E,C)$ are full corners of $L(E,C)$. In particular the inclusion $LV(E,C)\subseteq L(E,C)$ induces a monoid isomorphism 
	$$\mon (LV(E,C))\cong \mon (L(E,C)),$$ 
	and the usual restriction/extension process gives lattice isomorphisms $$\mathcal L (LV(E,C))\cong \mathcal L (L(E,C)),\qquad \Tr (LV(E,C)) \cong \Tr (L(E,C)).$$  Similar statements hold for  $LW(E,C)$ and $L(E,C)$,and also for the corresponding abelianized algebras $\LVab(E,C)$ and $\Lab (E,C)$.  	
	\end{proposition}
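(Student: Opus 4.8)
The plan is to reduce all the assertions to the single statement that $V=\sum_{v\in E^{0,0}}v$ and $W=\sum_{w\in E^{0,1}}w$ are \emph{full} idempotents in $M(L(E,C))$, and then to feed this into Morita theory for rings with local units. By Proposition \ref{prop:LVandLW} we identify $LV(E,C)=VL(E,C)V$ and $LW(E,C)=WL(E,C)W$; recall from the discussion preceding that proposition that $V+W=1$ and that the finite sums of vertices form a set of local units of $L(E,C)$. Consequently it is enough to show that the two-sided ideals $L(E,C)\,V\,L(E,C)$ and $L(E,C)\,W\,L(E,C)$ each contain every vertex of $E^0$: once they contain all vertices they absorb the local units and hence equal $L(E,C)$, which is exactly fullness.

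The core of the proof is these two elementary computations, and this is where the hypotheses $r(E^1)=E^{0,1}$ and $s(E^1)=E^{0,0}$ are consumed. For $V$: any $v\in E^{0,0}$ satisfies $v=vVv$, so it lies in the ideal generated by $V$; for $w\in E^{0,1}$ I would choose an edge $e$ with $r(e)=w$ (available since $r(E^1)=E^{0,1}$), observe that relation (SCK1) gives $w=r(e)=e^*e$, and that $s(e)\in E^{0,0}$ gives $e=Ve$, so $w=e^*Ve\in L(E,C)\,V\,L(E,C)$. Dually, for $W$: any $w\in E^{0,1}$ satisfies $w=wWw$; for $v\in E^{0,0}$ the hypothesis $s(E^1)=E^{0,0}$ forces $s^{-1}(v)\ne\emptyset$, so by row-finiteness there is a finite $X\in C_v$ and relation (SCK2) gives $v=\sum_{e\in X}ee^*$, while $r(e)\in E^{0,1}$ yields $e=eW$ and hence $ee^*=eWe^*\in L(E,C)\,W\,L(E,C)$. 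Thus both $V$ and $W$ are full, so $LV(E,C)$ and $LW(E,C)$ are full corners of $L(E,C)$.

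Granting fullness, the remaining statements are formal consequences of the fact that a full corner $eRe$ of a ring $R$ with local units is Morita equivalent to $R$. The restriction/extension correspondence then supplies the monoid isomorphism $\mon(LV(E,C))\cong\mon(L(E,C))$ induced by the inclusion, the induced lattice isomorphism $\mathcal{L}(LV(E,C))\cong\mathcal{L}(L(E,C))$ of order-ideal lattices of the $\mon$-monoids, and, combining this with \cite[Theorem 10.10]{AG2}, the trace-ideal isomorphism $\Tr(LV(E,C))\cong\Tr(L(E,C))$; the argument for $W$ is identical. For the abelianized algebras I would invoke Lemma \ref{lem:charaxcterizingLVab}, which identifies $\LVab(E,C)$ with the corner $\ol{V}\Lab(E,C)\ol{V}$ for $\ol{V}=\pi_{\infty}(V)$. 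Since $\pi_{\infty}\colon L(E,C)\to\Lab(E,C)$ is a surjective $*$-homomorphism and $V$ is full, the image $\ol{V}$ is again full, because the ideal it generates contains $\pi_{\infty}(L(E,C)\,V\,L(E,C))=\Lab(E,C)$; hence the same Morita argument applies to $\ol{V}\Lab(E,C)\ol{V}$ and delivers the corresponding monoid and lattice isomorphisms.

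The only genuinely delicate point is the pair of fullness computations in the second paragraph, since these are exactly where the standing hypotheses (and, through the finiteness needed to apply (SCK2), row-finiteness) are indispensable; by contrast, checking that the general full-corner Morita correspondence for non-unital rings with local units yields precisely the stated order-ideal and trace-ideal lattice isomorphisms is routine and should simply be anchored to \cite{AG2} and \cite{AAS}.
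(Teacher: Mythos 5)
Your proposal is correct and follows essentially the same route as the paper: identify $LV(E,C)=VL(E,C)V$ and $LW(E,C)=WL(E,C)W$ via Proposition \ref{prop:LVandLW}, deduce fullness of $V$ and $W$ directly from the relations (SCK1) and (SCK2) together with the hypotheses $s(E^1)=E^{0,0}$ and $r(E^1)=E^{0,1}$, and then invoke Morita theory for rings with local units. The paper merely compresses what you spell out --- it asserts that fullness `follows immediately from the defining relations' and cites \cite[Proposition 3.5]{GS} for the ideal lattices and the first paragraph of the proof of \cite[Lemma 7.3]{AF} for the $\mon$-monoids, whereas you make the fullness computations explicit, derive the trace-ideal isomorphism from the monoid isomorphism via \cite[Theorem 10.10]{AG2}, and treat the abelianized case explicitly through Lemma \ref{lem:charaxcterizingLVab} together with the (correct) observation that the image of a full idempotent under a surjective $*$-homomorphism is again full.
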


\begin{proof} Recall from Proposition \ref{prop:LVandLW} that $LV(E,C)=VL(E,C)V$ and $LW(E,C)=WL(E,C)W$, where $V=\sum_{v\in E^{0,0}} v\in M(L(E,C))$ and $W=\sum _{w\in E^{0,1}} w\in M(L(E,C))$. The fact that these are full corners follows immediately from the defining relations of $L(E,C)$, since $E^{0,0}=s(E^1)$ and $E^{0,1}= r(E^1)$.

Since all the involved algebras have local units, the result for the lattices of ideals follows from \cite[Proposition 3.5]{GS}. The proof for the $\mon$-monoids follows from the first paragraph of the proof of \cite[Lemma 7.3]{AF}.
	\end{proof}

We first define abstractly the monoid $M_1(E,\omega)$, and then we prove below
that
$M_1(E,\omega)$ is isomorphic to $\mon (L_1(E,\omega))$. 

\begin{definition}
	\label{def:Vmonoidweighted} 
	Let $(E,\omega)$ be a locally finite weighted graph. The monoid $M_1(E,\omega)$ is the commutative monoid with generators $\{ a_v: v\in E^0\}\cup \{a_{v(e,i)}: e\in E^1, 1\le i\le \omega (e)\}$ with the defining relations
	\begin{equation}
	\label{eq:Vmon1}
	a_v = \sum_{e\in s^{-1}(v), \omega (e)\ge i} a_{v(e,i)} \qquad (v\in \Ereg, 1\le i\le \omega (v)) 
	\end{equation}
	\begin{equation}
	\label{eq:Vmon2}
	a_{r(e)} = \sum _{i=1}^{\omega (e)} a_{v(e,i)} \qquad \,\, \,\,  (e\in E^1).
	\end{equation}
	\end{definition}

Observe that relations \eqref{eq:Vmon1} and \eqref{eq:Vmon2} give a refinement of the relation $\omega (v)a_v = \sum_{e\in s^{-1}(v)} a_{r(e)}$ for each $v\in \Ereg$. Observe also that there is a well-defined monoid homomorphism
$$\gamma_{(E,\omega)}\colon M_1(E,\omega) \longrightarrow \mon (L_1(E,\omega))$$
given by $\gamma (a_v)= [v]$ and $\gamma (a_{v(e,i)})= [e_ie_i^*]=[e_i^*e_i]$.

\begin{theorem}
	\label{thm:vmonoid}
	Let $(E,\omega)$ be a locally finite weighted graph. Then the natural homomorphism $\gamma_{(E,\omega)}\colon M_1(E,\omega)\to \mon (L_1(E,\omega))$ is an isomorphism.
\end{theorem}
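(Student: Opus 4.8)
The plan is to establish the isomorphism $\gamma_{(E,\omega)}\colon M_1(E,\omega)\to \mon(L_1(E,\omega))$ by reducing it, via the structural results already proved in the paper, to a statement about separated graphs where the $\mon$-monoid is known. By Theorem \ref{thm:IsoarbitraryWeightedGraph} we have a $*$-isomorphism $\Phi_1\colon L_1(E,\omega)\xrightarrow{\cong} LV(E(\omega)_1,C(\omega)^1)$, and by Proposition \ref{prop:Moritaequivalence} the inclusion $LV(E(\omega)_1,C(\omega)^1)\subseteq L(E(\omega)_1,C(\omega)^1)$ induces an isomorphism
\begin{equation*}
\mon(L_1(E,\omega))\cong \mon(LV(E(\omega)_1,C(\omega)^1))\cong \mon(L(E(\omega)_1,C(\omega)^1)).
\end{equation*}
So the first step is to identify $\mon(L(E(\omega)_1,C(\omega)^1))$ using the known presentation of the $\mon$-monoid of a Leavitt path algebra of a separated graph from \cite{AG2}: for a finitely separated graph $(F,D)$, $\mon(L(F,D))$ is the abelian monoid generated by $\{a_v : v\in F^0\}$ subject to the relations $a_v=\sum_{x\in X}a_{r(x)}$ for every $v\in F^0$ and every $X\in D_v$.

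The second step is to write down this presentation explicitly for $(F,D)=(E(\omega)_1,C(\omega)^1)$ using Definition \ref{def:sepgraphofarbitraryweightedgraph}, and to match it with the presentation of $M_1(E,\omega)$ in Definition \ref{def:Vmonoidweighted}. The generators of $\mon(L(E(\omega)_1,C(\omega)^1))$ are $\{a_v : v\in E^0\}\cup\{a_{v(e,i)} : e\in E^1,\ 1\le i\le\omega(e)\}$, matching the generators of $M_1(E,\omega)$. For the relations, a vertex $v\in E(\omega)_1^{0,0}=E^0$ carries the partition sets $X(v,i)$ for $1\le i\le\omega(v)$ and $X(e)$ for $e\in r^{-1}(v)$. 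The relation coming from $X(v,i)$ reads $a_v=\sum_{x\in X(v,i)}a_{r(x)}=\sum_{e\in s^{-1}(v),\ \omega(e)\ge i}a_{v(e,i)}$, which is exactly \eqref{eq:Vmon1}; the relation coming from $X(e)$ reads $a_v=\sum_{x\in X(e)}a_{r(x)}=\sum_{i=1}^{\omega(e)}a_{v(e,i)}$ with $v=r(e)$, which is exactly \eqref{eq:Vmon2}. The vertices $v(e,i)\in E(\omega)_1^{0,1}$ are sinks and contribute no relations. Thus the two presentations literally coincide, giving a monoid isomorphism $M_1(E,\omega)\cong\mon(L(E(\omega)_1,C(\omega)^1))$.

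The final step is to check that the composite isomorphism agrees with $\gamma_{(E,\omega)}$, i.e.\ that the identifications are compatible with the explicit generator map given before the theorem. One traces $a_v\mapsto [v]$ and $a_{v(e,i)}\mapsto a_{r(\alpha^e(i))}=[v(e,i)]$ through $\Phi_1$ and the corner inclusions; using Notation \ref{notation:LEw1Cw1} and the formula $\Phi_1(e_i)=\tau(\alpha^i(e),\alpha^e(i))=\alpha^i(e)\alpha^e(i)^*$, one has $\Phi_1(e_ie_i^*)=\alpha^i(e)\alpha^i(e)^*$, whose class corresponds to the vertex $r(\alpha^i(e))=v(e,i)$, matching $\gamma(a_{v(e,i)})=[e_ie_i^*]$. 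This confirms $\gamma_{(E,\omega)}$ is precisely the composite, hence an isomorphism.

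The main obstacle I anticipate is not conceptual but bookkeeping: correctly reading off the partition structure $C(\omega)^1$ from Definition \ref{def:sepgraphofarbitraryweightedgraph} and verifying that \emph{every} relation of the separated-graph monoid matches a relation of $M_1(E,\omega)$ and vice versa, with no spurious extra relations (in particular confirming the sink vertices $v(e,i)$ yield no relations, and that the two families $X(v,i)$ and $X(e)$ exhaust $C(\omega)^1_v$). A secondary point requiring care is ensuring the cited $\mon$-monoid presentation from \cite{AG2} applies in the locally finite (rather than merely finite) setting and that the well-definedness of $\gamma_{(E,\omega)}$, already asserted before the theorem, is compatible with these identifications; once the presentations are seen to coincide verbatim, injectivity and surjectivity of $\gamma_{(E,\omega)}$ follow immediately.
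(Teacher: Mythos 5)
Your proposal is correct and takes essentially the same approach as the paper: the paper's proof likewise factors $\gamma_{(E,\omega)}$ through $\mon(\Phi_1)$ and the full-corner inclusion of Proposition \ref{prop:Moritaequivalence}, identifies $M_1(E,\omega)$ with the graph monoid $M(E(\omega)_1,C(\omega)^1)$ of \cite{AG2}, and concludes via \cite[Theorem 4.3]{AG2}. The only difference is one of detail: you explicitly verify the matching of presentations and trace the generators (steps the paper treats as clear), and your two cautionary points are indeed resolved as you suspect, since $(E(\omega)_1,C(\omega)^1)$ is finitely separated so the cited result applies.
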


\begin{proof}
Note that since $LV(E(\omega)_1,C(\omega)^1)$ is a full corner of $L(E(\omega)_1, C(\omega)^1)$, we have that the inclusion $\iota \colon LV(E(\omega)_1,C(\omega)^1)\subseteq L(E(\omega)_1, C(\omega)^1)$ induces an isomorphism of the corresponding $\mon$-monoids. It is clear that 
$M_1(E,\omega)=M(E(\omega)_1,C(\omega)^1)$ (see \cite[Definition 4.1]{AG2} for the definition of the graph monoid $M(E,C)$ of a separated graph). 

The composition of the maps
$$M_1(E,\omega)\overset{\gamma}{\to} \mon (L(E,\omega)) \overset{\mon(\Phi_1)}{\to} \mon (LV(E(\omega)_1, C(\omega)^1)) \overset{\mon(\iota)}{\to} \mon (L(E(\omega)_1, C(\omega)^1))$$
agrees with the canonical map $M(E(w)_1,C(w)^1)\to \mon (L(E(\omega)_1, C(\omega)^1))$, which is an isomorphism by 
\cite[Theorem 4.3]{AG2}. Since both $\mon (\Phi_1)$ and $\mon (\iota )$ are isomorphisms, we obtain that $\gamma $ is also an isomorphism.  
 	\end{proof}

\begin{theorem}
	\label{thm:lattice-isoLone}
	Let $(E,\omega)$ be a locally finite weighted graph. Then $\Idem (L_1(E,\omega))= \Tr (L_1(E,\omega))$, and we have lattice isomorphisms
	$$\Idem (L_1(E,\omega)) \cong \mathcal L (M_1(E,\omega)) \cong \mathcal H (E(w)_1,C(w)^1).$$
\end{theorem}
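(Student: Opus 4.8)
The plan is to establish the two lattice isomorphisms as a chain, reducing everything to the separated-graph case already developed in the literature, and to obtain the equality $\Idem = \Tr$ from general structural results about Leavitt path algebras of separated graphs. First I would invoke Theorem \ref{thm:vmonoid}, which identifies $\mon(L_1(E,\omega))$ with $M_1(E,\omega) = M(E(\omega)_1, C(\omega)^1)$. Combined with the general lattice isomorphism $\mathcal{L}(\mon(R)) \cong \Tr(R)$ cited from \cite[Theorem 10.10]{AG2}, this immediately gives
$$\Tr(L_1(E,\omega)) \cong \mathcal{L}(\mon(L_1(E,\omega))) \cong \mathcal{L}(M_1(E,\omega)).$$
So the isomorphism $\Tr(L_1(E,\omega)) \cong \mathcal{L}(M_1(E,\omega))$ is essentially formal once Theorem \ref{thm:vmonoid} is in hand.

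Next I would handle the second isomorphism $\mathcal{L}(M_1(E,\omega)) \cong \mathcal{H}(E(\omega)_1, C(\omega)^1)$. Since $M_1(E,\omega) = M(E(\omega)_1, C(\omega)^1)$ is the graph monoid of the separated graph $(E(\omega)_1, C(\omega)^1)$, this should follow from the known correspondence between order-ideals of the graph monoid of a finitely separated graph and hereditary $C$-saturated subsets of vertices — the natural map sending an order-ideal to the set of vertices $v$ with $a_v$ in the order-ideal, with inverse sending $H$ to the order-ideal generated by $\{a_v : v \in H\}$. I would cite the relevant statement from \cite{AG2} (the section on the quotient construction, \cf Definition \ref{def:hereditaryCsaturated}); this is a standard lattice isomorphism for graph monoids of separated graphs, valid in the locally finite (hence finitely separated) setting.

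The remaining point, and the part requiring the most care, is the equality $\Idem(L_1(E,\omega)) = \Tr(L_1(E,\omega))$ together with identifying $\Idem(L_1(E,\omega))$ as the left-hand term of the chain. The inclusion $\Idem \subseteq \Tr$ always holds, so the content is that every trace ideal is idempotent-generated. Here I would transport the question through the $*$-isomorphism $\Phi_1 \colon L_1(E,\omega) \cong LV(E(\omega)_1, C(\omega)^1)$ of Theorem \ref{thm:IsoarbitraryWeightedGraph}, and then use that $LV(E(\omega)_1, C(\omega)^1)$ is a full corner of $L(E(\omega)_1, C(\omega)^1)$ (Proposition \ref{prop:Moritaequivalence}), which preserves both lattices under restriction/extension. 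For Leavitt path algebras of separated graphs the ideals generated by hereditary $C$-saturated vertex sets are generated by the idempotents (vertices) they contain, and \cite{AG2} shows these exhaust the trace ideals; this is precisely what yields $\Idem = \Tr$ and simultaneously realizes both as $\mathcal{H}(E(\omega)_1, C(\omega)^1)$.

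The main obstacle I anticipate is verifying that the full-corner reduction is compatible with all three lattices simultaneously — that restriction/extension along $V L(E(\omega)_1, C(\omega)^1) V \subseteq L(E(\omega)_1, C(\omega)^1)$ carries idempotent-generated ideals to idempotent-generated ideals and trace ideals to trace ideals, matching up the identifications. Once Proposition \ref{prop:Moritaequivalence} is granted for the lattices $\mathcal{L}$ and $\Tr$, the essential remaining check is that the graph-monoid order-ideals correspond to genuinely idempotent-generated ideals (not merely trace ideals) in the separated-graph algebra, so that the equality $\Idem = \Tr$ descends to $L_1(E,\omega)$; I would lean on the explicit description of ideals of $L(E,C)$ via hereditary $C$-saturated sets in \cite{AG2} to close this gap, after which the entire chain of isomorphisms assembles routinely.
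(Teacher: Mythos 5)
Your proof is correct and, for the two displayed isomorphisms, follows essentially the same path as the paper: Theorem \ref{thm:vmonoid} combined with \cite[Theorem 10.10]{AG2} gives $\Tr (L_1(E,\omega))\cong \mathcal L(\mon (L_1(E,\omega)))\cong \mathcal L(M_1(E,\omega))$, and the identification $M_1(E,\omega)=M(E(\omega)_1,C(\omega)^1)$ together with the order-ideal/hereditary-set correspondence (which in \cite{AG2} is Corollary 6.10, not the quotient construction of Section 6 that you gesture at) gives the second isomorphism. Where you genuinely diverge is the equality $\Idem (L_1(E,\omega))=\Tr (L_1(E,\omega))$. The paper obtains this in one step with no corner transport at all: since $\gamma_{(E,\omega)}$ is an isomorphism, the monoid $\mon (L_1(E,\omega))$ is generated by the classes $[v]$ and $[e_ie_i^*]$ of idempotents lying in $L_1(E,\omega)$ \emph{itself}, and then the argument in the proof of \cite[Proposition 6.2]{AG2} shows directly that every trace ideal is idempotent-generated. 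Your route through $\Phi_1$ and the full corner $LV(E(\omega)_1,C(\omega)^1)\subseteq L(E(\omega)_1,C(\omega)^1)$ is heavier exactly at the point you flag: unlike $\Tr$, the lattice $\Idem$ is not a priori invariant under the restriction/extension of Proposition \ref{prop:Moritaequivalence}, since a trace ideal is generated by entries of idempotent \emph{matrices} (which transfer across corners) whereas an ideal generated by idempotents of the ambient ring need not visibly restrict to one of the corner. Your proposed fix does work: for a hereditary $C(\omega)^1$-saturated set $H$, the restriction $V\,I(H)\,V$ is generated by the idempotents $v$ with $v\in H\cap E(\omega)_1^{0,0}$ together with $xx^*$ for $x\in E(\omega)_1^1$ with $r(x)\in H$, because $xy^*=xx^*\cdot xy^*$ by (SCK1); but carrying out this verification is precisely the labor that the paper's direct observation renders unnecessary. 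In short: correct, the same skeleton for the chain of lattice isomorphisms, and a workable but more roundabout detour for $\Idem=\Tr$ whose one nontrivial step you identified but did not fully execute.
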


\begin{proof}
	Since $\gamma \colon M_1(E,C) \to \mon (L_1(E,\omega))$ is an isomorphism, 
	the monoid $\mon (L_1(E,\omega))$ is generated by equivalence classes of idempotents  in $L_1(E,\omega)$ and hence $\Tr (L_1(E,\omega))= \Idem (L_1(E,\omega))$ (see the proof of \cite[Proposition 6.2]{AG2}).
	By Theorem \ref{thm:vmonoid} and \cite[Theorem 10.10]{AG2}, we get
	$$\Idem (L_1(E,\omega)) =\Tr (L_1(E,\omega)) \cong \mathcal L(\mon (L_1(E,\omega))) \cong \mathcal L(M_1(E,\omega)).$$
Since $M_1(E,\omega)= M(E(\omega)_1, C(\omega)^1)$, it follows from \cite[Corollary 6.10]{AG2} that $\mathcal L (M_1(E,\omega)) \cong \mathcal H (E(\omega)_1,C(\omega)^1)$, completing the proof. 
	\end{proof}

A detailed study of the structure of ideals of the algebras $\Lab(E,C)$, for a finite bipartite separated graph $(E,C)$, has been performed in \cite{AL}. The main point is that the lattice of {\it trace ideals} of $\Lab(E,C)$ is isomorphic to the lattice of hereditary $D_{\infty}$-saturated subsets of $F_{\infty}^0$, where $(F_{\infty}, D^{\infty})$ is the separated Bratteli diagram of $(E,C)$ (Definition \ref{definition-F.infty.and.others}). Using these results and the fact that the lattices of ideals are preserved under Morita equivalence of rings with local units, one can translate all these results to the algebras $\Lab (E,\omega)$ for any finite weighted graph $(E,\omega)$. Observe that the ideals of $\Lab (E,\omega)$ can be pulled back to the original algebra $L(E,\omega)$, because $\Lab(E,\omega)$ is a quotient algebra of $L(E,\omega)$. 

 \begin{theorem}
	\label{thm:ideals-theorem}
	Let $(E,\omega)$ be a finite weighted graph. Let $(E(\omega)_1, C(\omega)^1)$ be the bipartite separated graph from Definition \ref{def:sepgraphofarbitraryweightedgraph}, and let $(E(\omega)_{\infty}, C(\omega)^{\infty})$ be the corresponding separated Bratteli diagram. Then there is are lattice isomorphisms 
	$$\Idem (\Lab (E,\omega))\cong \mathcal L (\mon (\Lab (E,\omega)))\cong \mathcal H (E(\omega)_{\infty}, C(\omega)^{\infty}).$$ 
\end{theorem}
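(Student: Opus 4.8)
The plan is to transport the analysis of ideals of $\Lab(E,C)$ for a \emph{finite} bipartite separated graph, carried out in \cite{AL}, to the weighted setting via the identifications already established in the paper. Since $(E,\omega)$ is finite, the bipartite separated graph $(E(\omega)_1,C(\omega)^1)$ of Definition \ref{def:sepgraphofarbitraryweightedgraph} is finite as well, so the results of \cite{AL} apply to it. By Theorem \ref{thm:Labfor-weightedgraph} we have natural $*$-isomorphisms
$$\Lab(E,\omega) \cong \LVab(E(\omega)_1,C(\omega)^1) \cong \ol V\Lab(E(\omega)_1,C(\omega)^1)\ol V,$$
so the whole problem reduces to reading off the three lattices from $(E(\omega)_1,C(\omega)^1)$ and its separated Bratteli diagram $(E(\omega)_{\infty},C(\omega)^{\infty})$.

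First I would invoke Proposition \ref{prop:Moritaequivalence} in its abelianized form, which asserts that $\LVab(E(\omega)_1,C(\omega)^1)$ is a full corner of $\Lab(E(\omega)_1,C(\omega)^1)$ and that the full-corner inclusion induces isomorphisms on the $\mon$-monoids and on the lattices $\Tr$. Combining this with the first isomorphism of Theorem \ref{thm:Labfor-weightedgraph} gives
$$\mon(\Lab(E,\omega)) \cong \mon(\Lab(E(\omega)_1,C(\omega)^1)), \qquad \Tr(\Lab(E,\omega)) \cong \Tr(\Lab(E(\omega)_1,C(\omega)^1)).$$
Next, by \cite[Theorem 5.7]{AE} the algebra $\Lab(E(\omega)_1,C(\omega)^1)$ is a direct limit of Leavitt path algebras of separated graphs, whose $\mon$-monoids are generated by vertex classes; hence $\mon(\Lab(E,\omega))$ is generated by classes of idempotents (the vertices together with the range projections $e_ie_i^*$), and, exactly as in the proof of \cite[Proposition 6.2]{AG2} used for Theorem \ref{thm:lattice-isoLone}, this forces $\Idem(\Lab(E,\omega)) = \Tr(\Lab(E,\omega))$. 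Applying the general lattice isomorphism $\mathcal L(\mon(R)) \cong \Tr(R)$ of \cite[Theorem 10.10]{AG2} to $R=\Lab(E,\omega)$ then yields the first isomorphism of the statement, $\Idem(\Lab(E,\omega)) = \Tr(\Lab(E,\omega)) \cong \mathcal L(\mon(\Lab(E,\omega)))$.

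For the second isomorphism I would appeal directly to the main result of \cite{AL}: for a finite bipartite separated graph the lattice of trace ideals of its abelianized Leavitt path algebra is isomorphic to the lattice of hereditary $D^{\infty}$-saturated subsets of the vertex set of its separated Bratteli diagram. Taking the graph to be $(E(\omega)_1,C(\omega)^1)$, whose separated Bratteli diagram is $(E(\omega)_{\infty},C(\omega)^{\infty})$ in the sense of Definition \ref{definition-F.infty.and.others}, and chaining with the transport isomorphism for $\Tr$ obtained above, we get
$$\Tr(\Lab(E,\omega)) \cong \Tr(\Lab(E(\omega)_1,C(\omega)^1)) \cong \mathcal H(E(\omega)_{\infty},C(\omega)^{\infty}),$$
which completes the proof.

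I expect the main obstacle to be the identity $\Idem(\Lab(E,\omega)) = \Tr(\Lab(E,\omega))$, namely verifying that $\mon(\Lab(E,\omega))$ is generated by classes of genuine idempotents of $\Lab(E,\omega)$ itself, and not merely by classes coming abstractly through the Morita transport. One must trace the generators back through the full-corner identification and the direct-limit description of \cite[Theorem 5.7]{AE} and check they are represented by idempotents living in $\Lab(E,\omega)$. A secondary, purely bookkeeping point is to confirm that the diagram $(E(\omega)_{\infty},C(\omega)^{\infty})$ named in the statement really is the iterated $1$-step resolution of $(E(\omega)_1,C(\omega)^1)$ from Definition \ref{definition-F.infty.and.others}, so that \cite{AL} applies verbatim.
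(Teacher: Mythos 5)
Your proof is correct and follows essentially the same route as the paper, whose entire argument is ``Apply Theorem \ref{thm:Labfor-weightedgraph} and \cite[Theorem 4.5]{AL}'': you reduce to $\Lab(E(\omega)_1,C(\omega)^1)$ via the full-corner identification (Proposition \ref{prop:Moritaequivalence} in abelianized form) and then invoke the Ara--Lolk ideal-structure result for finite bipartite separated graphs. The extra steps you supply---the idempotent generation of $\mon(\Lab(E,\omega))$ via \cite[Theorem 5.7]{AE} and \cite[Proposition 6.2]{AG2}, and $\mathcal L(\mon(R))\cong\Tr(R)$ from \cite[Theorem 10.10]{AG2}---are just an unpacking of what the paper's citation to \cite[Theorem 4.5]{AL} already packages.
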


\begin{proof}
	Apply Theorem \ref{thm:Labfor-weightedgraph} 
	and \cite[Theorem 4.5]{AL}.
	\end{proof}

\begin{remark}
	\label{rem:graded} 
	\begin{enumerate}
		\item[(a)] Observe that all ideals of $L(E,\omega)$ obtained by pulling back the ideals described in Theorem \ref{thm:ideals-theorem} are graded ideals. Hence we obtain a large family of graded ideals of $L(E,\omega)$, shedding some light on the second Open Problem in \cite[Section 12]{PrSurvey}.  
		\item Non-graded ideals of $\Lab (E,C)$ are studied in \cite[Section 7]{AL}. This gives information on non-graded ideals of $L(E,\omega)$. 
	\end{enumerate}
\end{remark}

\subsection{Remarks on the ideal structure of $L(m,n)$}
\label{subsect:ideals}
We close the paper with some remarks on the ideal structure of the Leavitt algebra $L(m,n)$. Since the algebra $L(1,n)$ is simple for all $n\ge 2$, we concentrate here on the remaining cases, so we will assume throughout this subsection that $1<m\le n$. Recall that $L(m,n)$ is the $*$-algebra with generators $x_{ij}$, $1\le i \le m$, $1\le j \le n$, subject to the relations given by the matricial equations $XX^*=I_m$, $X^*X=I_n$, where $X=(x_{ij})$ and $X^*$ is the $*$-transpose of $X$.    

It is an interesting and challenging problem to construct maximal ideals of $L(m,n)$, and study the corresponding simple factor rings. In particular we do not know any maximal ideal of $L(m,n)$ such that the corresponding simple factor algebra retains the Leavitt type $(m,n-m)$ of $L(m,n)$, although we suspect such maximal ideals exist. 

We will construct here two maximal ideals of $L(m,n)$, and we will relate one of them to the Leavitt path algebra of the minimal weighted graph of shape $(m,n)$, defined below.

Recall that a {\it partition} of a positive integer $l$ is a sequence of positive integers $\lambda = (\lambda_1,\lambda_2,\dots , \lambda _r)$ such
that $\lambda _i \ge \lambda_{i+1}$ for $i=1,\dots , r-1$ and $l=\lambda _1+\cdots +\lambda_r$. We will use the concept of the {\it shape} of a partition \cite[Definition 2.1.1]{Sagan}. 

\begin{definition}
	\label{def:weightedgraphshapemn} Let $1<m<n$ be integers. We say that $(E,w)$ is an $(m,n)$-{\it weighted graph} if it is a weighted graph with one vertex $v$, $n$ edges, and $w(v)= m$. 
\end{definition} 

The $(m,n)$-weighted graphs are completely determined, up to permutation of the edges, by its {\it shape}, which is constructed as follows. Choose an enumeration of the edges $e^{(1)},\dots , e^{(n)}$ of $E$ such that $m= w(e^{(1)})\ge w(e^{(2)})\ge \cdots \ge w(e^{(n)})\ge 1$. Then the {\it shape} of $(E,\omega)$ is the shape of the partition $(\lambda_1,\lambda_2,\dots ,\lambda_m)$ of $\lambda_1+\lambda_2+\cdots +\lambda_m$, where $\lambda_i:=|\{e\in E^1 : w(e)\ge i\}|$ for $i=1,\dots , m$. Observe that (up to permutation of edges) any partition $(\lambda_1,\lambda_2,\dots , \lambda_m)$ such that $\lambda_1=n$ determines a unique $(m,n)$-weighted graph, setting $w(e^{(i)})$ equal to the length of the $i$-th column of the shape of $\lambda$. We call such a partition an {\it $(m,n)$-partition}.   

Say that $\lambda = (\lambda_1,\lambda_2,\dots \lambda_m) \le \mu = (\mu_1,\mu_2,\dots ,\mu_m)$ if $\lambda_i\le \mu_i $ for all $i=1,2,\dots ,m$. (Observe that this is {\it not} the dominance ordering introduced in \cite[Definition 2.2.2]{Sagan}.) With this order, the set of $(m,n)$-partitions is a lattice, with maximum element $(n,n,\dots ,n)$ ($m$ times) and minimum elementy $(n,1,1,\dots ,1)$ (with $m-1$ one's).  

We may think of the shape of an $(m,n)$-partition $\lambda$ as a $\{0,1\}$ $m\times n$-matrix. For instance the shape of the $(3,4)$-partition $(4,2,2)$ 
is the matrix
$$A= \begin{pmatrix}
1 & 1 & 1 & 1 \\ 1 & 1 & 0 & 0 \\ 1 & 1 & 0 & 0 \end{pmatrix},$$
and we have $\omega (e^{(1)})=  \omega (e^{(2)})= 3$ and $\omega (e^{(3)})= \omega (e^{(4)})= 1$. 
We can also associate to $\lambda$ the corresponding matrix of the generators $(x_{ij})$ of $L(E,w)$, where $x_{ij}= e^{(j)}_i$, which is the matrix obtained from the full matrix 
$$\begin{pmatrix}
x_{11} & x_{12} & \cdots & x_{1n} \\
x_{21} & x_{22} & \cdots & x_{2n} \\
& \cdots & \cdots &     \\
x_{m1} & x_{m2} & \cdots & x_{mn}        	
\end{pmatrix}$$
by substituting by $0$ the variables which are not in the positions allowed by the shape of the partition. Looking at the algebra $L_1(E,\omega)$ of the weighted graph associated to the $(m,n)$-partition $\lambda$, we can interpret the shape of $\lambda$ as the refinement matrix $R$ which defines the $\mon$-monoid $M_1(E,\omega)$ of $L_1(E,\omega)$ (see Theorem \ref{thm:vmonoid}). For instance for the above partition $\lambda = (4,2,2)$, we have the refinement matrix
$$R= \begin{pmatrix}
	v(e^{(1)},1) & v(e^{(2)},1) & v(e^{(3)},1) & v(e^{(4)},1) \\ v(e^{(1)},2) & v(e^{(2)},2) & 0 & 0 \\ v(e^{(1)},3) & v(e^{(2)},3) & 0 & 0 \end{pmatrix}.$$
The sum of each row and each column of the matrix $R$ gives $a_v$ in the monoid $M_1(E,\omega)$ (by relations \eqref{eq:Vmon1} and \eqref{eq:Vmon2}), so that $R$ gives a refinement of the key identity $ma_v=na_v$ in $M_1(E,\omega)$. 

Let $\omega^M$ be the weight corresponding to the largest $(m,n)$-partition $\lambda = (n,n,\dots , n) =:(n^m)$. Of course $L(E,\omega^M) =L(m,n)$. 
By Theorem \ref{thm:lattice-isoLone}, the poset $\mathcal P$ of {\it proper} order-ideals of the monoid $\mon (L_1(E,\omega^M))$, which is isomorphic to the lattice of proper trace ideals of $L_1(E,\omega^M)$, is in bijective correspondence with the set of  $\{0,1\}$ $m\times n$ matrices having no zero rows and columns. 
The set of {\it maximal trace ideals} of $L_1(E,\omega^M)$ corresponds to the set of {\it minimal configurations}, which means that for each position $(i,j)$ with $a_{ij}= 1$, either row $i$ or column $j$ of the matrix $A$ contains only one $1$ (the one corresponding to the position $(i,j)$).    
The lattice of $(m,n)$-partitions embeds in an order-reversing way into the poset $\mathcal P$.   

We finish the paper by giving the construction of two different simple factor $*$-algebras of $L(m,n)$. The first already appears in \cite{AL}, and it is $*$-isomorphic to $L(1,n-m+1)$, so it is an ordinary Leavitt path algebra. The second is apparently new, and it is intimately related to the minimal $(m,n)$-partititon. This new factor algebra is not Morita equivalent to any (ordinary) Leavitt path algebra.

We start with the already known example.

\begin{example} (cf. \cite[Example 6.6]{AL})
	\label{exam:alreradyknown}
Let $\pi \colon L(m,n)\to L(1,n-m+1)$ be the surjective $*$-homomorphism given by the assignments  
\begin{align*}
\begin{pmatrix}
x_{11} & x_{12} & \cdots & x_{1n} \\
x_{21} & x_{22} & \cdots & x_{2n} \\
& \cdots & \cdots &     \\
x_{m1} & x_{m2} & \cdots & x_{mn}        	
\end{pmatrix} & \mapsto 
\begin{pmatrix}
\begin{matrix}
x_{11} & 0 & \cdots & 0\\
0 & x_{22} & \cdots & 0\\
& \cdots & \cdots &   \\
0 & 0 & \cdots & x_{m-1,m-1}
\end{matrix} & {\bf 0}_{(m-1)\times (n-m+1)}  \\
{\bf 0}_{1\times (m-1)} & x_{m,m} \,\,  x_{m,m+1} \,\, \cdots \,\,  x_{m,n}      	
\end{pmatrix}\\
& \mapsto
\begin{pmatrix}
I_{m-1} & {\bf 0}_{(m-1)\times (n-m+1)} \\
\,\, {\bf 0}_{1\times (m-1)}  & x_{1} \,\,  x_{2} \,\, \cdots \,\,  x_{n-m+1} 
\end{pmatrix}
\end{align*}
where $x_1,x_2,\dots , x_{n-m+1}$ are the standard generators of $L(1,n-m+1)$. Obviously, the homomorphism $\pi$ factors through $\Lab (m,n)$.
\end{example}

We consider now the second example.

\begin{example}
	\label{exam:maximal.idealminpart} Let $3 \le  m \le n$. Then 
	the algebra $L(m,n)$ has a maximal ideal $\mathfrak m$ such that 
	$$L(m,n)/\mathfrak m \cong L(1,m-1)\otimes L(1,n-1).$$
	In particular the quotient  $L(m,n)/\mathfrak m$ is not Morita equivalent to any Leavitt path algebra.
\end{example}

\begin{proof} 
	Let $\omega_0$ be the weight corresponding to the minimal $(m,n)$-partition $(n,1^{m-1})$. It corresponds to the following generating matrix 
$$X=\begin{pmatrix}
x_{11} & x_{12} & \cdots & x_{1n} \\
x_{21} & 0 & \cdots & 0 \\
x_{31} & 0  & \cdots & 0 \\
& \cdots & \cdots &     \\
x_{m1} & 0 & \cdots & 0        	
\end{pmatrix}$$
with relations $XX^*=I_m$ and $X^*X=I_n$. Observe that in this case we have $L(E,\omega_0)=L_1(E,\omega_0)$, so that we exactly recover the weighted Leavitt path algebra with our construction. 
The monoid $M_1(E,\omega_0)\cong \mon (L(E,\omega_0))$ is given by
$$M_1(E,\omega _0)= \langle a, x \mid a= x+(m-1)a= x+(n-1)a\rangle .$$
(Here $a$ correspond to $a_v$ and $x$ to $a_{v(e^{(1)},1)}$.) 
It is easy to see that the Leavitt type of $L(E,\omega_0)$ is $(1,n-m)$. Indeed observe that 
$$(n-m+1)a= a+(n-m)a = x+(m-1)a+(n-m)a= x+(n-1)a = a,$$
and the Grothendieck group of $M_1(E,\omega_0)$ is $\Z/(n-m)$ (with generator $a$), so that the Leavitt type is $(1,n-m)$ as claimed.

The algebra $L(E,\omega_0)$ has a unique non-trivial trace ideal $M$, corresponding to setting $x_{11}= 0$. We have $L(E,\omega_0)/M \cong L(1,m-1)\ast L(1,n-1)$, the coproduct of the two simple Leavitt algebras $L(1,m-1)$ and $L(1,n-1)$. This is indeed the Leavitt path algebra of the separated graph $(E',C')$ with a unique vertex $v$ and with $C'=\{X,Y\}$,
$X=\{x_1,\dots , x_{n-1}\}$ and $Y=\{y_1,\dots , y_{m-1}\}$. 
The $*$-isomorphism is given by the assignment
$$ \begin{pmatrix}
0 & x_{12} & \cdots & x_{1n} \\
x_{21} & 0 & \cdots & 0 \\
x_{31} & 0  & \cdots & 0 \\
& \cdots & \cdots &     \\
x_{m1} & 0 & \cdots & 0        	
\end{pmatrix} 
\mapsto  \begin{pmatrix}
0 & x_{1} & \cdots & x_{n-1} \\
y_{1}^* & 0 & \cdots & 0 \\
y_{2}^* & 0  & \cdots & 0 \\
& \cdots & \cdots &     \\
y_{m-1}^* & 0 & \cdots & 0        	
\end{pmatrix} 
$$
We then have  
$$\mon (L(E,\omega_0)/M) \cong \langle a \mid a= (n-1)a= (m-1)a\rangle $$ 
and we get another drop in the Leavitt type, because this algebra has Leavitt type $(1,d)$ where $d:= \text{gcd} (m-2,n-2)$. 

We obtain a maximal ideal $\mathfrak m '\supset M$  such that 
$L(E,\omega_0)/\mathfrak m '\cong L(1,m-1)\otimes L(1,n-1)$, and pulling back this ideal to $L(m,n)$ we obtain the desired ideal $\mathfrak m$. The last statement follows from \cite[Theorem 5.1]{AC}.  
\end{proof}

%	\[ \left(\begin{array}{ccccc|ccc} 0 & 0 & 5 & 10 & 5 & 1 & 0 & 0 \\ 
%	0 & 3 & 3 & -3 & 0  & 0 & 1 & 0 \\
%	0 & 2 & 4 & 2  & 2  & 0 & 0 & 1  
%\end{array}\right) \]

%\bibliographystyle{abbrv}

%\bibliography{references}

\end{document}